\documentclass[11pt]{article}

\usepackage{amsmath, amssymb, amsthm, mathtools}
\usepackage{geometry}
\usepackage{mathrsfs}
\usepackage{enumitem}
\usepackage[authoryear]{natbib}
\usepackage{xcolor}
\usepackage[colorlinks=true,citecolor=blue,urlcolor=black]{hyperref}

\title{Stochastic Two-Species Chemotaxis--Competition: Global Well-Posedness and Continuous Dependence}
\author{
	Yiming Jiang$^{\mathrm{a}}$, Haohang Li$^{\mathrm{b},*}$, Yawei Wei$^{\mathrm{a}}$
	\\[0.4em]
	\small\textit{$^{\mathrm{a}}$School of Mathematical Sciences and LPMC, Nankai University, Tianjin, 300071, China}
	\\
	\small\textit{$^{\mathrm{b}}$School of Statistics and Data Science, Nankai University, Tianjin, 300071, China}
	\\[0.3em]
}
\date{}

\newtheorem{definition}{Definition}[section]
\newtheorem{theorem}[definition]{Theorem}
\newtheorem{lemma}[definition]{Lemma}

\newtheorem{assumption}[definition]{Assumption}

\numberwithin{equation}{section}

\begin{document}

	\maketitle
	\begingroup
	\renewcommand{\thefootnote}{\fnsymbol{footnote}}
	\footnotetext[1]{Corresponding author. E-mail: \texttt{1120240077@mail.nankai.edu.cn}}
	\endgroup

\begin{abstract}
	In this paper, we investigate a stochastic two species competition Keller--Segel model. Chemotactic movement, interspecific competition, and environmental fluctuations may act simultaneously when two species respond to the same chemical signal. We consider a stochastic two-species chemotaxis--competition system on a bounded smooth domain $\mathcal O\subset\mathbb R^n$, $n\in\{1,2\}$. Both species contribute to the production of the signal and move along its gradient; their population equations also include nonlinear competition and multiplicative noise. If the self-damping exponents exceed two, we prove that the system has a unique global adapted nonnegative mild solution. We also establish continuous dependence in probability on admissible initial data over finite time intervals. The results prove the global well-posedness under the stated assumptions and we still show that small changes in the initial populations, in probability, to small changes in the solution. Due to the two coupled chemotactic fluxes and the stochastic terms, the $L^p$ estimates contain terms with no fixed sign. They are controlled by parabolic estimates for the signal equation, superquadratic self-damping, and stochastic convolution estimates.
\end{abstract}
	\noindent\textbf{Keywords:}
	SPDE; stochastic Keller--Segel system; two-species competition; chemotaxis; global mild solution; continuous dependence.

	\begingroup
	\renewcommand{\thefootnote}{}
	\footnotetext{\textit{Email addresses:} 
		\texttt{ymjiangnk@nankai.edu.cn} (Yiming Jiang), \texttt{1120240077@mail.nankai.edu.cn} (Haohang Li), \texttt{weiyawei@nankai.edu.cn} (Yawei Wei)}
	\addtocounter{footnote}{-1}
	\endgroup

	\section{Introduction}

	Chemotaxis is the directed movement of cells or organisms along a chemical gradient. Within a community of interacting species, chemotactic movement may occur together with population growth, competition for resources, and chemical production. Experimental observations show that bacterial chemotaxis can produce strongly nonuniform spatial distributions \citep{Adler1966,BudreneBerg1991}. Chemotaxis toward autoinducer-2 has also been linked to aggregation, dual-species biofilm formation, and coexistence of bacterial strains \citep{LaganenkaColinSourjik2016,LaganenkaSourjik2018,LaganenkaEtAl2023}. Random changes in the environment provide a further source of variation, which leads naturally to stochastic population models.

	We consider the stochastic two-species chemotaxis--competition system
	\begin{equation}\label{eq:main}
		\left\{
		\begin{aligned}
			 d u
			={}&
			\Big[
			\Delta u-\chi_1\nabla\cdot(u\nabla w)
			+\mu_1u\bigl(1-u^{\zeta_1}-a_1v^{\eta_1}\bigr)
			\Big]\, d t
			+\sigma_1(u)\, d W_1(t),
			\\
			 d v
			={}&
			\Big[
			\Delta v-\chi_2\nabla\cdot(v\nabla w)
			+\mu_2v\bigl(1-v^{\zeta_2}-a_2u^{\eta_2}\bigr)
			\Big]\, d t
			+\sigma_2(v)\, d W_2(t),
			\\
			\partial_t w
			={}&
			\Delta w-w+\alpha u+\beta v,
		\end{aligned}
		\right.
	\end{equation}
	The system is posed in a bounded smooth domain $\mathcal O\subset\mathbb R^n$, $n\in\{1,2\}$, with homogeneous Neumann boundary conditions and nonnegative initial data. The functions $u=u(x,t)$ and $v=v(x,t)$ are the population densities of the two competing species, and $w=w(x,t)$ is the signal concentration. The constants $\chi_1$ and $\chi_2$ are the chemotactic sensitivities, while $\alpha$ and $\beta$ are the signal production rates. The powers $u^{\zeta_1}$ and $v^{\zeta_2}$ give nonlinear self-damping, and the terms containing $a_1v^{\eta_1}$ and $a_2u^{\eta_2}$ describe interspecific competition. We assume that the noise coefficients are globally Lipschitz as $\ell^2$-valued functions, vanish at zero, and have linear growth. In particular, the noise vanishes when the corresponding population density is zero.

	The deterministic Keller--Segel system has been studied in many forms; see \citep{Horstmann2003KellerSegelI,Horstmann2004KellerSegelII,HillenPainter2009} for surveys and \citep{HORSTMANN200552} for the boundedness and blow-up problem. Logistic damping can prevent excessive concentration. For a one-species system, \citet{TelloWinkler2007} proved global boundedness in low dimensions and under suitable damping conditions. \citet{Winkler2014Stability} later obtained asymptotic stability for a fully parabolic system when logistic damping is sufficiently strong.

	Two-species systems combine chemotactic drift with Lotka--Volterra competition. \citet{TelloWinkler2012} studied stabilization when two populations respond to the same signal, and \citet{StinnerTelloWinkler2014} treated competitive exclusion. For fully parabolic systems, global boundedness and spatial pattern formation were studied in one dimension by \citet{HuWangYangZhang2015} and in two dimensions by \citet{ZhangHuangXia2015}. \citet{BaiWinkler2016} proved global boundedness in dimensions one and two for the fully parabolic system with quadratic Lotka--Volterra damping and also studied equilibration. Further stability and boundedness results for weak competition and parabolic--parabolic--elliptic systems can be found in \citep{BlackLankeitMizukami2016,Mizukami2018}. More recent work also considers different forms of signal production and consumption \citep{ZhangLiu2025}. These results show that the outcome depends on the dimension, the signal equation, and the relative strength of chemotaxis and population damping.

	Stochastic Keller--Segel equations have been considered with several types of noise. \citet{Chavanis2010} derived stochastic chemotaxis models that retain fluctuations absent from the mean-field equation. \citet{HuangQiu2021} obtained a stochastic Keller--Segel equation from an interacting particle system and proved well-posedness together with a mean-field limit. \citet{HausenblasMukherjeeTran2022} constructed martingale solutions for a one-dimensional Keller--Segel system driven by spatial Wiener processes. Related existence and uniqueness results are available for two-dimensional stochastic chemotaxis--Navier--Stokes equations \citep{ZhaiZhang2020}. For a one-dimensional chemorepulsion equation with additive space--time white noise, \citet{ChevyrevHamblyMayorcas2023} proved global well-posedness and studied its invariant measure. Stochastic competitive reaction--diffusion systems without chemotaxis were studied by \citet{NguyenYin2021}.

	The effect of noise depends on its form. \citet{MisiatsStanzhytskyiTopaloglu2022} obtained global weak solutions for small data in a divergence-form setting and finite-time blow-up with positive probability for a different perturbation. Blow-up under conservative noise was also proved by \citet{MayorcasTomasevic2023}. \citet{CHEN2025113531} established well-posedness for a one-species stochastic chemotaxis system with a logistic source or nonlinear noise. The present paper concerns two stochastic population equations coupled through interspecific competition and a fully parabolic signal equation.

	The question raised by system~\eqref{eq:main} is whether the two population densities remain globally defined and nonnegative when signal-driven aggregation, competition, and environmental fluctuations act together. The deterministic two-species papers cited above study competition and a shared signal without stochastic forcing. The cited stochastic chemotaxis papers treat one population or structurally different equations, while stochastic competitive systems without chemotaxis do not contain signal-driven aggregation. In system~\eqref{eq:main}, multiplicative noise acts on both population equations, and the two species interact through signal-driven movement and nonlinear competition. These features distinguish the present model from the cited systems.

	For $n\in\{1,2\}$, we prove that if $\zeta_1,\zeta_2>2$ and the competition exponents are at least one, then system~\eqref{eq:main} has a unique global adapted nonnegative mild solution. The initial population densities are continuous, the initial signal belongs to $W^{1,\infty}(\mathcal O)$, and the initial data are nonnegative with the required finite moments. No smallness condition is imposed on the initial data or on the positive coefficients. We further prove continuous dependence in probability over every finite time interval. Here convergence is measured in $C(\overline{\mathcal O})\times C(\overline{\mathcal O})\times W^{1,q}(\mathcal O)$ for $q>n$, and the family of solutions is assumed to satisfy a uniform  bound. Thus the model does not lose well-posedness in finite time under the stated damping and noise assumptions. Nonnegativity preserves the meaning of $u$, $v$, and $w$ as population densities and signal concentration, while continuous dependence gives stability with respect to the initial state. The precise statements are given in Theorems~\ref{thm:main-result} and~\ref{thm:initial-data-continuity}.

	Due to the two chemotactic fluxes and the stochastic terms, the $L^p$ It\^o identities contain coupled terms with no fixed sign. In addition, the estimates needed for global existence must be uniform in the cut-off level. Parabolic estimates for the signal equation control $\nabla w$ by the population densities, and the assumption $\zeta_i>2$ allows the resulting powers to be absorbed by self-damping. Heat-semigroup and stochastic convolution estimates then yield  $L^\infty$ bounds for $u$ and $v$, which exclude a finite maximal time. Continuous dependence follows by applying local difference estimates until either solution exits a fixed bounded set and then using the uniform moment bound to remove the localization.

	The paper is organized as follows. Section~2 introduces the notation and preliminary estimates. In Section~3, we construct the maximal local solution and prove nonnegativity and uniqueness. Section~4 establishes the uniform a priori estimates and proves Theorem~\ref{thm:main-result}. Section~5 proves the continuous dependence result, and Section~6 concludes the paper.

	\section{Preliminaries}

	Let $\mathcal{O} \subset \mathbb{R}^n, n=1,2$ be a bounded domain with smooth boundary. We denote by $L^p(\mathcal{O})$ and $W^{k,p}(\mathcal{O})$ the standard Lebesgue and Sobolev spaces.

	For $p \in (1,\infty)$, we define the Neumann Laplacian
	\[
	A := A_p = -\Delta
	\]
	with domain
	\[
	D(A)
	:=
	\left\{
	u \in W^{2,p}(\mathcal{O})
	\;:\;
	\frac{\partial u}{\partial \nu}=0
	\text{ on } \partial\mathcal{O}
	\right\}.
	\]
	The spectrum of $A$ is a $p$-independent countable set of nonnegative real numbers
	\[
	0=\nu_0<\nu_1<\nu_2<\cdots.
	\]
	For $\gamma\ge 0$, the fractional power $(A+1)^\gamma$ is well defined and satisfies the embeddings
	\[
	D((A_p+1)^\gamma)
	\hookrightarrow W^{1,p}(\mathcal{O})
	\qquad \text{if } \gamma>\frac12,
	\]
	and
	\[
	D((A_p+1)^\gamma)
	\hookrightarrow C(\overline{\mathcal O})
	\qquad \text{if } 2\gamma>\frac{n}{p}.
	\]
	The following smoothing estimates are classical (see, e.g., \cite{HORSTMANN200552}).

	\begin{lemma}[Heat semigroup estimates]
		\label{lem:heat-semigroup}
		There exist constants $C>0$ and $\nu>0$, smaller than the first nonzero eigenvalue $\nu_1$, such that:
		\begin{enumerate}
			\item[(i)] For all $t>0$, $1<p<\infty$, $\gamma\ge0$, and $\phi\in L^p(\mathcal{O})$,
			\[
			\|(A_p+1)^\gamma e^{-tA}\phi\|_{L^p(\mathcal{O})}
			\le
			C(1+t^{-\gamma})
			\|\phi\|_{L^p(\mathcal{O})}.
			\]

			\item[(ii)] For all $t>0$, $1<p<\infty$, and $\phi\in L^p(\mathcal{O})$,
			\[
			\|\nabla e^{-tA}\phi\|_{L^p(\mathcal{O})}
			\le
			C(1+t^{-1/2})
			\|\phi\|_{L^p(\mathcal{O})}.
			\]

			\item[(iii)] If $\gamma>0$, $p\in(1,\infty)$, and $t>0$, then
			\[
			\|(A_p+1)^\gamma e^{-tA}\nabla\cdot\boldsymbol\phi\|_{L^p(\mathcal{O})}
			\le
			C(1+t^{-\gamma-\frac12-\varepsilon})
			\|\boldsymbol\phi\|_{L^p(\mathcal{O};\mathbb R^n)},
			\qquad t>0,
			\]
			for $\boldsymbol\phi\in L^p(\mathcal{O};\mathbb R^n)$, where $\varepsilon>0$ can be taken arbitrarily small.

			\item[(iv)] For $w\in L^p(\mathcal{O})$,
			\[
			\|(A+1)^\gamma e^{-t(A+1)}w\|_{L^p(\mathcal{O})}
			\le
			Ct^{-\gamma}e^{-\nu t}\|w\|_p,
			\qquad t>0.
			\]
		\end{enumerate}
	\end{lemma}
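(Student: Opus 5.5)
These estimates are classical, so the plan is to derive them from the spectral theory of the Neumann Laplacian and from standard analytic-semigroup theory, as in \cite{HORSTMANN200552}. The starting point is that $A_p+1$ is sectorial on $L^p(\mathcal O)$ for $1<p<\infty$, with spectrum contained in $[1,\infty)$. Hence $e^{-t(A+1)}$ is an analytic semigroup, and the moment inequality gives $\|(A_p+1)^\gamma e^{-t(A_p+1)}\|\le C_\gamma t^{-\gamma}e^{-\nu' t}$ for any $\nu'<1$. Since $e^{-tA}=e^{t}e^{-t(A+1)}$, this yields (i) on $t\in(0,1]$. For $t\ge1$ I split $\phi=\bar\phi+(\phi-\bar\phi)$, where $\bar\phi$ is the mean. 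The constant part is invariant under $e^{-tA}$, and $(A+1)^\gamma$ acts on it as the identity. On mean-zero functions, $A$ has spectral gap $\nu_1$, so $\|(A+1)^\gamma e^{-tA}(\phi-\bar\phi)\|_p\le Ce^{-\nu t}\|\phi\|_p$ for any $\nu<\nu_1$. Together these give the bound $C(1+t^{-\gamma})$. The same argument applied directly to $e^{-t(A+1)}$ gives (iv), with decay rate $e^{-\nu t}$ for some $\nu<\min(1,\nu_1)$, after shrinking $\nu$ if necessary.

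For (ii) I would combine (i) with $\gamma=\tfrac12+\delta$ and the embedding $D((A_p+1)^{1/2+\delta})\hookrightarrow W^{1,p}(\mathcal O)$. This gives $\|\nabla e^{-tA}\phi\|_p\le C(1+t^{-1/2-\delta})\|\phi\|_p$. To get the sharp exponent $-\tfrac12$, I would instead use $D((A_p+1)^{1/2})=W^{1,p}$ with equivalent norms, which is the Kato square-root property for the Neumann Laplacian on smooth bounded domains. Alternatively, one can cite the classical gradient bound for the Neumann heat kernel. For $t\ge1$, note that $\nabla$ annihilates the mean, so the spectral-gap argument gives exponential decay.

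For (iii), which I expect to be the main obstacle, I would argue by duality, because $\nabla\cdot\boldsymbol\phi$ is only a distribution. Take $\psi\in L^{p'}$ and compute
\[
\langle (A+1)^\gamma e^{-tA}\nabla\cdot\boldsymbol\phi,\psi\rangle
=-\langle\boldsymbol\phi,\nabla e^{-tA}(A+1)^\gamma\psi\rangle .
\]
This uses that the semigroup and the fractional powers are self-adjoint in the $L^2$ pairing, and that $e^{-tA}$ is Neumann, so no boundary term appears (the operator on $\boldsymbol\phi$ is understood in this weak sense). Next I split the time as $t/2+t/2$ and write
\[
\nabla e^{-tA}(A+1)^\gamma\psi=\nabla e^{-\frac t2 A}\,\bigl[(A+1)^\gamma e^{-\frac t2A}\psi\bigr].
\]
Applying (ii) in $L^{p'}$ and then (i) gives the bound $C(1+t^{-1/2})(1+t^{-\gamma})\|\psi\|_{p'}$. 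This is at most $C(1+t^{-\gamma-1/2})$, and therefore also at most $C(1+t^{-\gamma-1/2-\varepsilon})$. The $\varepsilon$ loss is needed only if one uses the non-sharp route via the embedding with $\gamma=\tfrac12+\delta$. The one delicate point is making the duality rigorous. I would first take $\boldsymbol\phi\in C_c^\infty(\mathcal O;\mathbb R^n)$, where all expressions are classical, and then extend by density to all of $L^p$.
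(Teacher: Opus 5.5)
Your proposal is correct. The paper gives no argument for this lemma beyond calling it classical and citing Horstmann--Winkler. What you write is essentially the standard proof behind that citation: sectorial bounds for $A_p+1$ give (i) and (iv), and (iii) comes from duality for $\boldsymbol\phi\in C_c^\infty(\mathcal O;\mathbb R^n)$ followed by extension by density. That density extension is also the usual way $(A_p+1)^\gamma e^{-tA}\nabla\cdot$ is given meaning on all of $L^p$.

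The one genuine difference is how you control the gradient. You use $D((A_p+1)^{1/2})=W^{1,p}(\mathcal O)$ (the $L^p$ Riesz-transform/Kato bound for the Neumann Laplacian on a smooth bounded domain), or the Neumann heat-kernel gradient estimate. This gives (ii) with the sharp exponent and (iii) with no $\varepsilon$ at all. The $\varepsilon$ in the stated (iii) is the price of routing instead through the embedding $D((A_p+1)^{1/2+\varepsilon})\hookrightarrow W^{1,p}(\mathcal O)$ recorded just before the lemma. Your route uses a deeper input but yields a strictly stronger bound. The embedding route is more elementary, and either suffices for the later use, where only $\gamma+\frac12+\varepsilon<1$ matters.

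Three small points.

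\begin{itemize}
\item Some steps are heavier than needed. For $t\ge1$ in (i) no spectral gap is required: $(A+1)^\gamma e^{-tA}=e^{-(t-1)A}(A+1)^\gamma e^{-A}$, and the Neumann heat semigroup is an $L^p$ contraction. Also, (iv) is already your first estimate for $e^{-t(A_p+1)}$ with any $\nu<1$; taking $\nu<\min\{1,\nu_1\}$, as you say, finishes it.
\item In the duality identity, the boundary term vanishes because $\boldsymbol\phi$ has compact support, not because $e^{-tA}$ is the Neumann semigroup. The condition $\partial_\nu g=0$ does not kill $\int_{\partial\mathcal O}(\boldsymbol\phi\cdot\nu)g$. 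Since you work with $C_c^\infty$ first, the argument stands, but the stated justification should be corrected.
\item Your constants depend on $\gamma$ (and on $p$, and on $\varepsilon$ in the embedding route). This is unavoidable. Testing (i) at $t=1$ on an eigenfunction for $\nu_1$ produces the factor $(\nu_1+1)^\gamma e^{-\nu_1}$, which is unbounded in $\gamma$. So the lemma must be read with $C=C(p,\gamma,\varepsilon)$, which is how the paper uses it.
\end{itemize}
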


	Let $(\Omega,\mathcal{F},(\mathcal{F}_t)_{t\ge 0},\mathbb{P})$ be a filtered probability space. For each $i=1,2$, let $(W_k^i)_{k\ge1}$ be a sequence of independent real-valued Brownian motions. We write
	\[
	\sigma_i(z)\,dW_i(t)
	:=
	\sum_{k=1}^{\infty}\sigma_k^i(z)\,dW_k^i(t)
	\]
	and impose the following assumption.
	\begin{assumption}\label{ass:stochastic-sigma}
		For each $i=1,2$, let $\sigma^i=(\sigma_k^i)_{k\ge1}:\mathbb R\to\ell^2$. There exists $K>0$ such that, for all $z,z_1,z_2\in\mathbb R$,
		\begin{enumerate}
			\item $\sigma^i(0)  = 0$,
			\item $\|\sigma^i(z)\|_{\ell^2}  \le K(|z| + 1)$,
			\item $\|\sigma^i(z_1) - \sigma^i(z_2)\|_{\ell^2} \le K|z_1-z_2|$.
		\end{enumerate}
	\end{assumption}

	We recall a lemma which provides estimates for the convolution \(M(t) = \int_{0}^{t}e^{-(t-s)A}h(s)\,dW_s\).
	\begin{lemma}\label{lem:stochastic-convolution}
		[{\cite[Lemma~3.1]{CHEN2025113531}}] Suppose $h=(h_k)_{k=1}^\infty$ is an $\ell^2$-valued progressively measurable function on $\Omega\times[0,T]\times\mathcal O$ such that \[\mathbb{E}\|\|h\|_{\ell^2}\|_{2,2,T}^2<\infty.\] Then the stochastic convolution has the following $L^\infty$ estimate:
		\begin{equation}
			\mathbb{E}\sup_{t\in[0,T\wedge\tau]} \|M(t)\|_{L^\infty(\mathcal{O})}^\eta
			\le C (1+T)^{C'}
			\left(
			\mathbb{E}\|\|h\|_{\ell^2}\|_{2r,2q,T\wedge\tau}^{\eta}
			\right),
		\end{equation}
		for any $\eta > 0$, $r, q \in (1, \infty]$ with $\frac{1}{r} + \frac{1}{q} < 1 $ and stopping time $\tau$. The constants $C, C'$  depend on $\eta, r, q$.

		In particular, the estimate will be mainly used in the case $\eta=1$, $r=\frac32$ and $q=\infty$. Then it becomes
		\begin{equation}\label{eq:convolution-infty}
			\begin{aligned}
		\mathbb E\sup_{t\in[0,T\wedge\tau]}
		\|M(t)\|_{L^\infty(\mathcal O)}
		&\le
		C(1+T)^{C'}
		\mathbb E
		\left(
		\int_0^{T\wedge\tau}
		\|\|h(s)\|_{\ell^2}\|_{L^\infty(\mathcal O)}^3\,ds
		\right)^{\frac13}
		\end{aligned}
	\end{equation}
	\end{lemma}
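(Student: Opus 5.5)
This lemma is quoted from \cite[Lemma~3.1]{CHEN2025113531}, so the plan is to recall how such an estimate is obtained: the Da Prato--Kwapie\'n--Zabczyk factorization method, followed by the special case \eqref{eq:convolution-infty}.

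\textbf{Factorization.} Fix $\theta\in(0,\tfrac12)$. Using
\[
\int_s^t (t-r)^{\theta-1}(r-s)^{-\theta}\,dr=\frac{\pi}{\sin\pi\theta},
\]
we write
\[
M(t)=\frac{\sin\pi\theta}{\pi}\int_0^t (t-r)^{\theta-1}e^{-(t-r)A}Y(r)\,dr,
\qquad
Y(r)=\int_0^r (r-s)^{-\theta}e^{-(r-s)A}h(s)\,dW_s .
\]
To handle the stopping time, we replace $h$ by $h\mathbf 1_{[0,\tau]}$. This is still progressively measurable and agrees with $h$ up to $T\wedge\tau$, so we may take $\tau=T$.

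\textbf{Moment bound for $Y$.} Fix $x$. Then $Y(r,x)$ is a real stochastic integral with kernel $G_{r-s}(x,\cdot)$. By Burkholder--Davis--Gundy (for $\eta<2$ we first pass to a larger moment and use H\"older, or use the Lenglart-type version) we get
\[
\mathbb E|Y(r,x)|^{p}\le C\,\mathbb E\Big(\int_0^r (r-s)^{-2\theta}\,\big\|e^{-(r-s)A}\|h(s)\|_{\ell^2}\big\|^{2}\,ds\Big)^{p/2}.
\]
Pointwise, we use $\big|\sum_k (e^{-tA}h_k)^2\big|^{1/2}\le e^{-tA}\|h\|_{\ell^2}$, which follows from Minkowski and positivity of the Neumann heat kernel. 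We then bound
\[
\|e^{-tA}g\|_{L^{2q}}\le C\,\|g\|_{L^{2q}} .
\]
Combining with H\"older in time against $\|h\|_{L^{2r}_tL^{2q}_x}$ requires $(r-s)^{-2\theta}$ to be integrable with exponent $r'=r/(r-1)$, that is $2\theta r'<1$. This yields a bound on $\mathbb E\|Y\|_{L^p(0,T;L^{2q})}^p$ by $\mathbb E\|\|h\|_{\ell^2}\|_{2r,2q,T}^p$, with constants polynomial in $T$.

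\textbf{Deterministic step.} We show that
\[
R_\theta f(t)=\int_0^t (t-r)^{\theta-1}e^{-(t-r)A}f(r)\,dr
\]
maps $L^p(0,T;L^{2q})$ into $C([0,T];L^\infty)$. By Lemma~\ref{lem:heat-semigroup} and Sobolev embedding,
\[
\|e^{-tA}\|_{L^{2q}\to L^\infty}\le C\bigl(1+t^{-n/(4q)}\bigr).
\]
H\"older in $r$ then needs
\[
\Big(\theta-1-\tfrac{n}{4q}\Big)p'>-1,
\quad\text{i.e.}\quad
\theta>\tfrac1p+\tfrac{n}{4q}.
\]
Taking $p$ large, we must fit $\theta$ between $\tfrac{n}{4q}+\tfrac1p$ and $\tfrac{1}{2r'}=\tfrac12-\tfrac1{2r}$. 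This is possible exactly when
\[
\frac{n}{2q}+\frac1r<1 .
\]
For $n\le2$ this is implied by the hypothesis $\tfrac1r+\tfrac1q<1$. This exponent bookkeeping is the main obstacle, and it is where the condition $\tfrac1r+\tfrac1q<1$ enters.

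\textbf{Conclusion for general $\eta$ and the special case.} For large $\eta$, taking the supremum and then expectation finishes the proof. For small $\eta$ (in particular $\eta=1$), we apply the $p$-th moment argument conditionally on the stopping times
\[
\tau_R=\inf\Big\{t:\ \|\|h\|_{\ell^2}\|_{2r,2q,t}\ge R\Big\}
\]
and use the Lenglart domination inequality to lower the moment to any $\eta>0$. Finally, choosing $r=\tfrac32$, $q=\infty$ gives $\tfrac1r+\tfrac1q=\tfrac23<1$, and the norm becomes
\[
\big(\textstyle\int_0^{T\wedge\tau}\|\|h\|_{\ell^2}\|_{L^\infty}^3\,ds\big)^{1/3},
\]
which is exactly \eqref{eq:convolution-infty}.
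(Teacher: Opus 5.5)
\textbf{Context and what is sound.} The paper gives no proof of this lemma; it imports it from \cite[Lemma~3.1]{CHEN2025113531}, so your factorization argument has to stand on its own. Much of it does. The following pieces are correct:
\begin{itemize}
\item the reduction to $\tau=T$ via $h\mathbf 1_{[0,\tau]}$;
\item the pointwise bound $\|(e^{-tA}h_k)_k\|_{\ell^2}\le e^{-tA}\|h\|_{\ell^2}$;
\item the final exponent condition $\frac1r+\frac n{2q}<1$, which is implied by $\frac1r+\frac1q<1$ for $n\le 2$;
\item the Lenglart step with $X_t=\sup_{s\le t}\|M(s)\|_{L^\infty}^p$ and $A_t=C\bigl(\int_0^t\|\|h\|_{\ell^2}\|_{L^{2q}}^{2r}\,ds\bigr)^{p/(2r)}$, where domination at stopping times is supplied by your $h\mathbf 1_{[0,\sigma]}$ reduction.
\end{itemize}
You should drop the remark about passing to a larger moment and using H\"older for $\eta<2$. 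BDG holds for every $p>0$, and H\"older cannot lower the moment on the right-hand side; Lenglart is what does that.

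\textbf{The gap: the moment bound for $Y$.} Fixed-$x$ BDG controls $\mathbb E|Y(r,x)|^p$ by $\mathbb E S(x)^p$, where $S(x)^2=\int_0^r(r-s)^{-2\theta}|e^{-(r-s)A}g(s)|^2(x)\,ds$ and $g:=\|h\|_{\ell^2}$. This does not give the bound on $\mathbb E\|Y\|_{L^p(0,T;L^{2q})}^p$ you claim, for three reasons.
\begin{itemize}
\item Your deterministic step needs $p$ large, hence $p>2q$. Minkowski then gives $(\mathbb E\|Y(r)\|_{L^{2q}}^p)^{1/p}\le\|(\mathbb E S^p)^{1/p}\|_{L^{2q}}$. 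Replacing the right side by $(\mathbb E\|S\|_{L^{2q}}^p)^{1/p}$ is the reverse Minkowski inequality, which fails in general.
\item If you take $p=2q$ so that Fubini applies, the constraints become $\frac1{2q}+\frac n{4q}<\theta<\frac12-\frac1{2r}$, i.e.\ $\frac1r+\frac{2+n}{2q}<1$. This misses part of the stated range, e.g.\ $n=2$, $r=\frac32$, $q=4$.
\item For $q=\infty$, the case you end with, there is no BDG inequality in $L^\infty$ at all.
\end{itemize}

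\textbf{How to close it.} There are two routes.
\begin{itemize}
\item Use the BDG inequality for $L^{2q}$-valued stochastic integrals; this is valid for $2\le 2q<\infty$, since $L^{2q}$ is UMD of type $2$. Its square function $\|(\sum_k|\Phi_k|^2)^{1/2}\|_{L^{2q}}$, combined with your positivity bound and Minkowski in $L^q_x$, yields exactly the estimate you claimed.
\item Alternatively, keep the pointwise BDG but measure $Y$ in $L^p_x$. Pay $\|e^{-tA}\|_{L^{2q}\to L^p}\le C(1+t^{-\frac n2(\frac1{2q}-\frac1p)})$ inside the $Y$-integral and $\|e^{-tA}\|_{L^p\to L^\infty}\le C(1+t^{-\frac n{2p}})$ in $R_\theta$. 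This leads to
\[
\frac1p+\frac n{2p}<\theta<\frac12-\frac1{2r}-\frac n2\Bigl(\frac1{2q}-\frac1p\Bigr),
\]
which is compatible for large $p$ exactly when $\frac1r+\frac n{2q}<1$, so your bookkeeping survives.
\end{itemize}
The case $q=\infty$, in particular \eqref{eq:convolution-infty}, then follows by reduction. Choose a finite $\tilde q$ with $\frac1r+\frac1{\tilde q}<1$ and use $\|g\|_{L^{2\tilde q}}\le|\mathcal O|^{1/(2\tilde q)}\|g\|_{L^\infty}$. Note that the paper itself applies the lemma with finite $q_0>3$ in Lemma~\ref{lem:Linf-bound-global}.
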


	\begin{assumption}\label{ass:initial-data}
		The initial data $u_0$, $v_0$ and $w_0$ are $\mathcal F_0$-measurable random variables such that
		\[
		u_0,v_0\in C(\overline{\mathcal O}),
		\qquad
		w_0\in W^{1,\infty}(\mathcal O),
		\quad \mathbb P\text{-a.s.}
		\]
		Moreover,
		\[
		u_0(x)\ge0,\qquad v_0(x)\ge0,\qquad w_0(x)\ge0,
		\quad x\in{\mathcal O},
		\quad \mathbb P\text{-a.s.}
		\]
		Finally, for every $p\ge2$,
		\[
		\mathbb E\left[
		\left(
		\|u_0\|_{L^\infty}
		+\|v_0\|_{L^\infty}
		+\|w_0\|_{W^{1,\infty}}
		\right)^p
		\right]<\infty.
		\]
	\end{assumption}

	Throughout the paper, $C$ denotes a generic positive constant which may change from line to line.




	\section{Local Existence of Solutions}

	In this section, we prove the existence and uniqueness of a maximal local mild solution. We use only $\zeta_i,\eta_i\ge1$ here; the stronger self-damping condition is imposed in Section~4. We first solve a truncated system by the Banach fixed point theorem, then remove the truncation by stopping times. We also prove that the solution remains nonnegative.


	\subsection{Existence of truncated system}

	Let $\theta\in C^1([0,\infty);[0,1])$ satisfy
	\[
	\theta(r)=1\quad\text{for }0\le r\le1,
	\qquad
	\theta(r)=0\quad\text{for }r\ge2,
	\qquad
	\sup_{r\ge0}|\theta'(r)|<\infty.
	\]
	For $m\in\mathbb N$, set
	\[
	\theta_m(r):=\theta\left(\frac r m\right).
	\]
	For a pair $U=(u,v)$, define
	\[
	R_t(U)
	:=
	\sup_{0\le s\le t}
	\left(
	\|u(s)\|_{L^\infty}
	+
	\|v(s)\|_{L^\infty}
	\right),
	\qquad
	\Theta_m^U(t):=\theta_m\bigl(R_t(U)\bigr).
	\]


	We consider the truncated mild system that for every $m\in\mathbb N$ and every $T>0$
	\begin{equation}
		\label{eq:cutoff-system}
		\left\{
		\begin{aligned}
			d u
			={}&
			\Big[
			\Delta u
			-\chi_1\Theta_m^U(t)\nabla\cdot(u\nabla w)
			+\Theta_m^U(t)F_1(u,v)
			\Big]dt
			+\sigma_1(u)\,dW_1(t),
			\\
			d v
			={}&
			\Big[
			\Delta v
			-\chi_2\Theta_m^U(t)\nabla\cdot(v\nabla w)
			+\Theta_m^U(t)F_2(v,u)
			\Big]dt
			+\sigma_2(v)\,dW_2(t),
			\\
			\partial_t w
			={}&
			\Delta w-w+\alpha u+\beta v,
		\end{aligned}
		\right.
	\end{equation}
	with homogeneous Neumann boundary condition
	\[
	\partial_\nu u=\partial_\nu v=\partial_\nu w=0
	\quad\text{on }\partial\mathcal O,
	\]
	and initial condition
	\[
	u(0)=u_0,\qquad v(0)=v_0,\qquad w(0)=w_0,
	\]
	where
	\[
	F_1(u,v)
	=
	\mu_1 u\left(1-u_+^{\zeta_1}-a_1v_+^{\eta_1}\right),
	\qquad
	F_2(v,u)
	=
	\mu_2 v\left(1-v_+^{\zeta_2}-a_2u_+^{\eta_2}\right),
	\]
	and $r_+=\max\{r,0\}$.

	\begin{theorem}[Cut-off system]
		\label{thm:cutoff-existence}
		Suppose that Assumptions~\ref{ass:stochastic-sigma} and~\ref{ass:initial-data} hold. For every $T>0$ and $q>n$, the cut-off system~\eqref{eq:cutoff-system} admits a unique mild solution satisfying
		\[
		u,v\in
		L^1\bigl(
		\Omega;
		C([0,T];C(\overline{\mathcal O}))
		\bigr),
		\]
		and
		\[
		w\in
		L^1\bigl(
		\Omega;
		C([0,T];W^{1,q}(\mathcal O))
		\bigr).
		\]
		Moreover, $u,v,w\ge0$ almost surely.
	\end{theorem}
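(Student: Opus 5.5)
We will solve the cut-off system~\eqref{eq:cutoff-system} as a fixed point of its mild formulation. Since the $w$-equation is linear and deterministic given $(u,v)$, we first eliminate $w$:
\[
w(t)=e^{-t(A+1)}w_0+\int_0^t e^{-(t-s)(A+1)}\bigl(\alpha u(s)+\beta v(s)\bigr)\,ds=:\mathcal W(U)(t).
\]
For $q>n$, Lemma~\ref{lem:heat-semigroup}(iv) with $\gamma\in(\tfrac12,1)$ and the embedding $D((A_q+1)^\gamma)\hookrightarrow W^{1,q}$ give $\sup_{t\le T}\|\nabla\mathcal W(U)(t)\|_{L^q}\le C(\|w_0\|_{W^{1,\infty}}+T^{1-\gamma}R_T(U))$. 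The bound is linear in $R_T(U)$, and the analogous difference bound is Lipschitz. Continuity in time with values in $W^{1,q}$ follows in the standard way.

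The map will be $\Phi(U)=(\Phi_1,\Phi_2)$, where
\[
\Phi_1(U)(t)=e^{-tA}u_0-\chi_1\!\int_0^t e^{-(t-s)A}\Theta_m^U(s)\nabla\cdot(u\nabla\mathcal W(U))\,ds+\int_0^t e^{-(t-s)A}\Theta_m^U(s)F_1(u,v)\,ds+\int_0^t e^{-(t-s)A}\sigma_1(u)\,dW_1,
\]
and $\Phi_2$ is defined analogously. We work on the Banach space $\mathcal X_T$ of adapted processes with continuous paths in $C(\overline{\mathcal O})^2$, normed by $\mathbb E\sup_{t\le T}(\|u\|_\infty+\|v\|_\infty)$. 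The integrability of $u_0,v_0$ is guaranteed by Assumption~\ref{ass:initial-data}.

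\textbf{Drift terms.} The key point is that the cut-off makes the drift globally Lipschitz. We use the standard device for $\Theta_m^U$: let $\tau_U=\inf\{t:R_t(U)\ge 2m\}$. The integrands vanish after $\tau_U$, and before it $\|u\|_\infty,\|v\|_\infty\le 2m$. For the chemotactic term we apply Lemma~\ref{lem:heat-semigroup}(iii) with $p=q$ and $\gamma$ satisfying $2\gamma>n/q$ and $\gamma+\tfrac12+\varepsilon<1$. This is possible because $q>n$, and it gives a bound in $C(\overline{\mathcal O})$ with an integrable kernel $(t-s)^{-\gamma-1/2-\varepsilon}$.

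For two processes $U^1,U^2$ we split
\[
\Theta^{1}u^1\nabla w^1-\Theta^{2}u^2\nabla w^2=(\Theta^1-\Theta^2)u^1\nabla w^1+\Theta^2\bigl(u^1\nabla w^1-u^2\nabla w^2\bigr).
\]
Here $|\Theta^1-\Theta^2|\le \|\theta'\|_\infty m^{-1}\sup_s\|U^1-U^2\|_\infty$, and we assume $\tau_{U^1}\le\tau_{U^2}$ without loss of generality, so that all factors are bounded by constants depending on $m$. The term $\|\nabla w^1\|_{L^q}$ still contains $w_0$, which is random and unbounded. This forces a further localization: we either (a) work pathwise on $\{\|w_0\|_{W^{1,\infty}}\le k\}\in\mathcal F_0$ and patch together using uniqueness, or (b) include $\|w_0\|$ in the cut-off. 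We plan (a). The reaction terms $F_i$ are locally Lipschitz on $[-2m,2m]^2$ since $\zeta_i,\eta_i\ge1$, so the same argument applies to them. Together this gives a contraction factor $C_m(T^{\delta})$ for the drift.

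\textbf{Stochastic convolution.} This term is handled by Lemma~\ref{lem:stochastic-convolution} in the form~\eqref{eq:convolution-infty}. Assumption~\ref{ass:stochastic-sigma} gives
\[
\mathbb E\sup_t\Big\|\int_0^t e^{-(t-s)A}\bigl(\sigma_1(u^1)-\sigma_1(u^2)\bigr)dW_1\Big\|_\infty\le C(1+T)^{C'}K\,T^{1/3}\,\mathbb E\sup_{t\le T}\|u^1-u^2\|_\infty,
\]
and path continuity follows from the same factorization estimate. Hence $\Phi$ is a contraction on $\mathcal X_{T_0}$ for small $T_0$, with $T_0$ depending only on $m$ (and $k$). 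Iterating over $[jT_0,(j+1)T_0]$ yields a unique solution on $[0,T]$, and letting $k\to\infty$ removes the $w_0$-localization.

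\textbf{Nonnegativity.} We expect this to be the main obstacle, since mild solutions do not allow a direct maximum principle. The plan is to apply It\^o's formula to $\int_{\mathcal O}(u^-)^2$. To justify this we regularize, e.g.\ with Yosida approximations $u_\lambda=\lambda(\lambda+A)^{-1}u$, or by working with a smooth approximation $\phi_\varepsilon(u)\to(u^-)^2$. Key points:
\begin{itemize}
\item $F_1(u,v)u^-$ equals $-\mu_1(u^-)^2(1-a_1v_+^{\eta_1})$ on $\{u<0\}$, because $u_+=0$ there; this is bounded by $C_m(u^-)^2$ thanks to the cut-off.
\item The chemotactic term gives $\int u^-\nabla u^-\cdot\nabla w\le\frac12\|\nabla u^-\|_2^2+C\|\nabla w\|_\infty^2\|u^-\|_2^2$. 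Since $\|\nabla w\|_{L^q}$ alone is not sufficient, we instead use the $L^\infty$ bound on $u$ together with $\int u^-\nabla u^-\cdot\nabla w=-\tfrac12\int(u^-)^2\Delta w$, which requires control of $\Delta w$ (obtainable in $L^p$ from maximal regularity when $w_0$ is smooth; otherwise we approximate $w_0$).
\item The It\^o correction satisfies $\|\sigma_1(u)\|_{\ell^2}^2\mathbf 1_{u<0}\le K^2(u^-)^2$, using $\sigma(0)=0$ and the Lipschitz bound.
\end{itemize}
Gronwall's inequality then gives $\mathbb E\|u^-(t)\|_2^2=0$, and similarly for $v$. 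Finally $w\ge0$ follows from positivity of $e^{-t(A+1)}$.
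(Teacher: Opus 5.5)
\textbf{Overall.} Your construction of the fixed point is essentially the paper's argument. You use the same space normed by $\mathbb E\sup_t(\|u\|_{L^\infty}+\|v\|_{L^\infty})$, Lemma~\ref{lem:heat-semigroup}(iii) for the chemotactic term, \eqref{eq:convolution-infty} for the noise, and the same stepwise extension.

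Your $\mathcal F_0$-localization on $A_k:=\{\|w_0\|_{W^{1,\infty}}\le k\}$ improves on the paper. The constant $C_m$ in the paper's contraction estimate silently contains $\|w_0\|_{W^{1,\infty}}$ through \eqref{eq:WU-estimate-Linf}. Without such a localization, a contraction in $\mathbb E\sup$ is therefore not justified.

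There is also a minor slip in your splitting $(\Theta^1-\Theta^2)u^1\nabla w^1+\Theta^2(u^1\nabla w^1-u^2\nabla w^2)$. You must normalize $\tau_{U^2}\le\tau_{U^1}$, not the reverse. With your ordering, $u^1$ is uncontrolled on $[\tau_{U^1},\tau_{U^2})$, where $\Theta^2\neq0$.

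\textbf{The gap: the chemotaxis term in the nonnegativity step.} You drop the bound $C\|\nabla w\|_{L^\infty}^2\|u^-\|_{L^2}^2$. In its place you write the term as $-\frac12\int_{\mathcal O}(u^-)^2\Delta w\,dx$ and combine an $L^p$ bound on $\Delta w$ with the $L^\infty$ bound on $u$. This does not close.

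\begin{itemize}
\item With $\Delta w$ only in $L^p$, $p<\infty$, using $\|u^-\|_{L^\infty}\le 2m$ as you propose gives $\int_{\mathcal O}(u^-)^2|\Delta w|\,dx\le \|u^-\|_{L^\infty}^{2/p}\|u^-\|_{L^2}^{2/p'}\|\Delta w\|_{L^p}$.
\item This is sublinear in $y=\|u^-\|_{L^2}^2$. But $y'\le Cy^{1/p'}$ with $y(0)=0$ does not force $y\equiv0$.
\item Maximal regularity also gives $\Delta w$ only in $L^p$ in time, and, as you note, only for $w_0$ smoother than $W^{1,\infty}$.
\item To rescue this route you would need Gagliardo--Nirenberg against the dissipation, a Gronwall argument with a random, merely time-integrable coefficient, and an approximation of $w_0$ backed by continuous dependence for the cut-off system. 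None of these is in your plan.
\end{itemize}

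\textbf{The premise is also false.} On the support of $\Theta_m^U(t)$ one has $\|u(s)\|_{L^\infty}+\|v(s)\|_{L^\infty}\le 2m$ for all $s\le t$. Three ingredients then bound $\nabla w$ in $L^\infty$:
\begin{itemize}
\item the mild formula for $w$ with $w_0\in W^{1,\infty}$;
\item the standard Neumann heat-semigroup estimate $\|\nabla e^{-t(A+1)}f\|_{L^\infty}\le C(1+t^{-1/2})\|f\|_{L^\infty}$;
\item the bound $\|e^{-t(A+1)}w_0\|_{W^{1,\infty}}\le C\|w_0\|_{W^{1,\infty}}$ for the initial part.
\end{itemize}
Together these give $\Theta_m^U(t)\|\nabla w(t)\|_{L^\infty}\le C_{m,T}(1+\|w_0\|_{W^{1,\infty}})$, which is deterministic on $A_k$. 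Your first Young bound then yields
\[
\mathbb E\bigl[\mathbf 1_{A_k}\|u^-(t)\|_{L^2}^2\bigr]\le C_{m,T,k}\int_0^t\mathbb E\bigl[\mathbf 1_{A_k}\|u^-(s)\|_{L^2}^2\bigr]\,ds,
\]
and Gronwall finishes. This is exactly the paper's estimate \eqref{eq:nonneg-chemo}, made rigorous by your localization.

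Even the $L^q$ bound on $\nabla w$ you dismissed is enough. For $q>\max\{n,2\}$, Gagliardo--Nirenberg gives
\[
\|u^-\|_{L^{2q/(q-2)}}\le C\|\nabla u^-\|_{L^2}^{n/q}\|u^-\|_{L^2}^{1-n/q}+C\|u^-\|_{L^2}.
\]
Hence the chemotaxis term is absorbed into the dissipation, leaving $C(1+\|\nabla w\|_{L^q})^{2q/(q-n)}\|u^-\|_{L^2}^2$.
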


	\begin{proof}
		For $T>0$, define $S_T$ as the space of all progressively measurable $C(\overline{\mathcal O})\times C(\overline{\mathcal O})$-valued processes $U=(u,v)$ such that
		\[
		\|U\|_{S_T}
		:=
		\mathbb E
		\sup_{0\le t\le T}
		\left(
		\|u(t)\|_{L^\infty}
		+
		\|v(t)\|_{L^\infty}
		\right)
		<\infty.
		\]
		Then $S_T$, equipped with the above norm, is a Banach space.

		For each $U=(u,v)\in S_T$, let $W_U$ be the solution of
		\[
		\partial_t w
		=
		\Delta w - w +\alpha u+\beta v,
		\qquad
		{w}_\nu = 0 \text{ on } \partial \mathcal{O},
		\qquad
		w(0)=w_0.
		\]


		Fix $q>n$ and $\gamma'\in(1/2,1)$. Using the embedding
		\[
		D\bigl((A_q+1)^{\gamma'}\bigr)\hookrightarrow W^{1,q}(\mathcal O)
		\]
		and the heat-semigroup estimates (see lemma \ref{lem:heat-semigroup}), we obtain, for $0\le t\le T$,
		\begin{equation}
		\label{eq:WU-estimate-q}
		\begin{aligned}
			\|W_U(t)\|_{W^{1,q}} &\leq \left\|e^{-t(A+1)}w_0\|_{W^{1,\infty}} + \int_{0}^{t}  \|(A+1)^{\gamma'} e^{-(t-s)(A+1)} (\alpha u +\beta v)\right\|_{L^q}\,ds\\
							  &\leq \|e^{-t(A+1)}w_0\|_{W^{1,\infty}} + \int_{0}^{t}  (t-s)^{-\gamma'} e^{-\nu(t-s)}\| \alpha u +\beta v\|_{L^q}\,ds\\
							  &\leq C_{\|w_0\|_{W^{1,\infty}}} + C T^{1-\gamma'} (\sup_{0\le s\le t}\|u(s)\|_{L^q} + \sup_{0\le s\le t}\|v(s)\|_{L^q}),
		\end{aligned}
		\end{equation}
		and
		\begin{equation}
			\label{eq:WU-estimate-Linf}
			\begin{aligned}
				\|W_U(t)\|_{W^{1,q}}
				&\le
				C_{w_0} + C T^{1-\gamma'}
				\left(
				\sup_{0\le s\le t}\|u(s)\|_{L^\infty}
				+
				\sup_{0\le s\le t}\|v(s)\|_{L^\infty}
				\right) .
			\end{aligned}
		\end{equation}
		For later use, we also record the difference estimate. For $U_i=(u_i,v_i)\in S_T$, $i=1,2$, we have
		\begin{equation}
			\label{eq:WU-difference}
			\begin{aligned}
				\|W_{U_1}(t)-W_{U_2}(t)\|_{W^{1,q}} \le
				C T^{1-\gamma'}
				\sup_{0\le s\le t}
				\left(
				\|u_1(s)-u_2(s)\|_{L^\infty}
				+
				\|v_1(s)-v_2(s)\|_{L^\infty}
				\right).
			\end{aligned}
		\end{equation}

		We now define a map
		\[
		\Phi=(\Phi_1,\Phi_2):S_T\to S_T
		\]
		by
		\begin{equation}
			\label{eq:Phi1}
			\begin{aligned}
				\Phi_1(U)(t)
				={}&
				e^{-tA}u_0
				-\chi_1
				\int_0^t
				e^{-(t-s)A}\nabla\cdot
				\left(
				\Theta_m^U(s)u(s)\nabla W_U(s)
				\right)\,ds
				\\
				&+
				\int_0^t
				e^{-(t-s)A}\Theta_m^U(s)F_1(u(s),v(s))\,ds
				\\
				&+
				\int_0^t
				e^{-(t-s)A}\sigma_1(u(s))\,dW_1(s),
			\end{aligned}
		\end{equation}
		and
		\begin{equation}
			\label{eq:Phi2}
			\begin{aligned}
				\Phi_2(U)(t)
				={}&
				e^{-tA}v_0
				-\chi_2
				\int_0^t
				e^{-(t-s)A}\nabla\cdot
				\left(
				\Theta_m^U(s)v(s)\nabla W_U(s)
				\right)\,ds
				\\
				&+
				\int_0^t
				e^{-(t-s)A}\Theta_m^U(s)F_2(v(s),u(s))\,ds
				\\
				&+
				\int_0^t
				e^{-(t-s)A}\sigma_2(v(s))\,dW_2(s).
			\end{aligned}
		\end{equation}

	We next show that $\Phi$ maps $S_T$ into itself. Choose $p>2$, use $q=p$ in the preceding estimates, and take
	\[
	\frac1p<\gamma<\frac12,
	\qquad
	0<\varepsilon<\frac12-\gamma.
	\]
	By Lemma~\ref{lem:heat-semigroup},
	\begin{equation}
		\begin{aligned}
			\|\Phi_1(U)(t)\|_{L^\infty}
			&\le
			C\|e^{-tA}u_0\|_{L^\infty}
			+
			\chi_1
			\int_0^t \left\|(A+1)^\gamma e^{-(t-s)A}\nabla\cdot
			\left(
			\Theta_m^U(s)u(s)\nabla W_U(s)
			\right)\right\|_{L^p}\,ds
			\\
			&\quad+
			\int_0^t
			\left\|e^{-(t-s)A}\Theta_m^U(s)F_1(u(s),v(s))\right\|_{L^\infty}\,ds\\
			&\quad
			+
			\left\|
			\int_0^t
			e^{-(t-s)A}\sigma_1(u(s))\,dW_1(s)
			\right\|_{L^\infty}
			\\
			&\le
			C\|u_0\|_{L^\infty}
			+
			C \int_0^t (t-s)^{- \frac{1}{2} - \gamma -\varepsilon} \left\|
			\left(
			\Theta_m^U(s)u(s)\nabla W_U(s)
			\right)\right\|_{L^p}\,ds  \\
			&\quad+
			\int_0^t
			\left\|\Theta_m^U(s)F_1(u(s),v(s))\right\|_{L^\infty}\,ds
			\\
			&\quad
			+
			\left\|
			\int_0^t
			e^{-(t-s)A}\sigma_1(u(s))\,dW_1(s)
			\right\|_{L^\infty}.
		\end{aligned}
	\end{equation}

	On the support of $\Theta_m^U(s)$, one has
	\[
	\|u(s)\|_{L^\infty}
	+
	\|v(s)\|_{L^\infty}
	\le2m.
	\]
	Combining this with \eqref{eq:WU-estimate-Linf}, for $p>n$, we get
	\begin{equation*}
		\left\|
		\Theta_m^U(s)u(s)\nabla W_U(s)
		\right\|_{L^p}
		+
		\left\|
		\Theta_m^U(s)v(s)\nabla W_U(s)
		\right\|_{L^p} \le C_m \|\nabla W_U\|_{L^p}
		\le C_{m}\bigl(1+T^{1-\gamma'}\bigr) .
	\end{equation*}
	Moreover, on the same support,
	\begin{equation*}
		\left\|
		\Theta_m^U(s)F_1(u(s),v(s))
		\right\|_{L^\infty}
		+
		\left\|
		\Theta_m^U(s)F_2(v(s),u(s))
		\right\|_{L^\infty}
		\le
		C_{m}.
	\end{equation*}
	Hence
	\begin{equation}
		\label{eq:Phi1-selfmap-pathwise}
		\begin{aligned}
			\|\Phi_1(U)(t)\|_{L^\infty}	
			&\le
			C\|u_0\|_{L^\infty}
			+
			C_{m}\bigl(1+T^{1-\gamma'}\bigr)t^{\frac{1}{2}-\gamma-\varepsilon}
			+
			C_m t
			\\
			&\quad
			+
			\left\|
			\int_0^t
			e^{-(t-s)A}\sigma_1(u(s))\,dW_1(s)
			\right\|_{L^\infty}.
		\end{aligned}
	\end{equation}
	Taking the supremum over $t\in[0,T]$ and then taking expectations, by Lemma~\ref{lem:stochastic-convolution} and the linear growth of $\sigma_1$, we have
	\begin{equation}
		\label{eq:stoch-selfmap-u}
		\mathbb E
		\sup_{0\le t\le T}
		\left\|
		\int_0^t
		e^{-(t-s)A}\sigma_1(u(s))\,dW_1(s)
		\right\|_{L^\infty}
		\le
		C_T
		\left(
		1+
		\mathbb E
		\sup_{0\le t\le T}
		\|u(t)\|_{L^\infty}
		\right).
	\end{equation}
	Thus
	\[
	\mathbb E
	\sup_{0\le t\le T}
	\|\Phi_1(U)(t)\|_{L^\infty}
	<\infty.
	\]
	The same argument yields
	\[
	\mathbb E
	\sup_{0\le t\le T}
	\|\Phi_2(U)(t)\|_{L^\infty}
	<\infty.
	\]
	Hence $\Phi(U)\in S_T$.

	It remains to prove that $\Phi$ is a contraction. Let $U_i=(u_i,v_i)\in S_T$, $i=1,2$. The difference between \(\Phi(U_1)\) and \(\Phi(U_2)\) is
	\begin{equation}
		\label{eq:Phi-difference}
		\begin{aligned}
			\Phi_1(U_1)(t) - \Phi_1(U_2)(t)
			={}&
			-\chi_1
			\int_0^t
			e^{-(t-s)A}\nabla\cdot\Bigl(
			\Theta_m^{U_1}(s)u_1(s)\nabla W_{U_1}(s)
			\\[-0.2em]
			&-\Theta_m^{U_2}(s)u_2(s)\nabla W_{U_2}(s)\Bigr)ds
			\\
			&+
			\int_0^t
			e^{-(t-s)A}\Bigl(\Theta_m^{U_1}(s)F_1(u_1(s),v_1(s))
			\\[-0.2em]
			&-\Theta_m^{U_2}(s)F_1(u_2(s),v_2(s))\Bigr) \,ds
			\\
			&+
			\int_0^t
			e^{-(t-s)A}
			\Bigl(\sigma_1(u_1(s))-\sigma_1(u_2(s))\Bigr)
			\,dW_1(s).
		\end{aligned}
	\end{equation}
	Since
	\[
	\theta_m'(r)
	=
	\frac1m\theta'\left(\frac r m\right),
	\]
	we have
	\[
	|\theta_m'(r)|\le \frac{C}{m}.
	\]
	By the definition,
	\[
	\begin{aligned}
		&\quad
		\left|
		R_t(U_1)-R_t(U_2)
		\right|
		\\
		&\le
		\sup_{0\le s\le t}
		\left(
		\|u_1(s)-u_2(s)\|_{L^\infty}
		+
		\|v_1(s)-v_2(s)\|_{L^\infty}
		\right).
	\end{aligned}
	\]
	Therefore,
	\begin{equation}
		\label{eq:cutoff-difference}
		\begin{aligned}
			&\quad
			\left|
			\Theta_m^{U_1}(t)-\Theta_m^{U_2}(t)
			\right|
			\\
			&\le
			\frac{C}{m}
			\sup_{0\le s\le t}
			\left(
			\|u_1(s)-u_2(s)\|_{L^\infty}
			+
			\|v_1(s)-v_2(s)\|_{L^\infty}
			\right).
		\end{aligned}
	\end{equation}

	We first estimate the reaction terms. We claim that
	\begin{equation}
		\label{eq:F1-difference}
		\begin{aligned}
			&\quad
			\left\|
			\Theta_m^{U_1}(t)F_1(u_1(t),v_1(t))
			-
			\Theta_m^{U_2}(t)F_1(u_2(t),v_2(t))
			\right\|_{L^\infty}
			\\
			&\le
			C_m
			\sup_{0\le s\le t}
			\left(
			\|u_1(s)-u_2(s)\|_{L^\infty}
			+
			\|v_1(s)-v_2(s)\|_{L^\infty}
			\right).
		\end{aligned}
	\end{equation}
	Indeed, if
	\[
	R_t(U_1)\vee R_t(U_2)\le2m,
	\]
	then all arguments of $F_1$ remain in a bounded interval depending only on $m$. Hence, by the mean value theorem and \eqref{eq:cutoff-difference},
	\[
	\begin{aligned}
		&\quad
		\left\|
		\Theta_m^{U_1}F_1(u_1,v_1)
		-
		\Theta_m^{U_2}F_1(u_2,v_2)
		\right\|_{L^\infty}
		\\
		&\le
		\left|
		\Theta_m^{U_1}
		-
		\Theta_m^{U_2}
		\right|
		\|F_1(u_1,v_1)\|_{L^\infty}
		+
		\Theta_m^{U_2}
		\|F_1(u_1,v_1)-F_1(u_2,v_2)\|_{L^\infty}
		\\
		&\le
		C_m
		\sup_{0\le s\le t}
		\left(
		\|u_1(s)-u_2(s)\|_{L^\infty}
		+
		\|v_1(s)-v_2(s)\|_{L^\infty}
		\right).
	\end{aligned}
	\]
	If
	\[
	R_t(U_1)\wedge R_t(U_2)\ge2m,
	\]
	then both cut-off factors vanish. If, for instance,
	\[
	R_t(U_1)>2m,
	\qquad
	R_t(U_2)\le2m,
	\]
	then $\Theta_m^{U_1}(t)=0$ and, by \eqref{eq:cutoff-difference},
	\[
	\Theta_m^{U_2}(t)
	=
	\left|
	\Theta_m^{U_2}(t)-\Theta_m^{U_1}(t)
	\right|
	\le
	\frac{C}{m}
	\sup_{0\le s\le t}
	\left(
	\|u_1(s)-u_2(s)\|_{L^\infty}
	+
	\|v_1(s)-v_2(s)\|_{L^\infty}
	\right).
	\]
	On the set where $\Theta_m^{U_2}(t)\ne0$, one has
	\[
	\|F_1(u_2(t),v_2(t))\|_{L^\infty}\le C_m.
	\]
	This proves \eqref{eq:F1-difference}. The other mixed case is identical. Similarly,
	\begin{equation}
		\label{eq:F2-difference}
		\begin{aligned}
			&
			\left\|
			\Theta_m^{U_1}(t)F_2(v_1(t),u_1(t))
			-
			\Theta_m^{U_2}(t)F_2(v_2(t),u_2(t))
			\right\|_{L^\infty}
			\\
			&\qquad\le
			C_m
			\sup_{0\le s\le t}
			\left(
			\|u_1(s)-u_2(s)\|_{L^\infty}
			+
			\|v_1(s)-v_2(s)\|_{L^\infty}
			\right).
		\end{aligned}
	\end{equation}

	We now estimate the chemotaxis terms. More precisely,  we show that
	\begin{equation}
		\label{eq:chemotaxis-u-difference}
		\begin{aligned}
			&
			\left\|
			\Theta_m^{U_1}(t)u_1(t)\nabla W_{U_1}(t)
			-
			\Theta_m^{U_2}(t)u_2(t)\nabla W_{U_2}(t)
			\right\|_{L^p}
			\\
			&\qquad\le
			C_{m}\bigl(1+T^{1-\gamma'}\bigr)
			\sup_{0\le s\le t}
			\left(
			\|u_1(s)-u_2(s)\|_{L^\infty}
			+
			\|v_1(s)-v_2(s)\|_{L^\infty}
			\right).
		\end{aligned}
	\end{equation}
	If
	\[
	R_t(U_1)\vee R_t(U_2)\le2m,
	\]
	then, by \eqref{eq:WU-estimate-Linf}, \eqref{eq:WU-difference}, and \eqref{eq:cutoff-difference},
	\[
	\begin{aligned}
		&\quad
		\left\|
		\Theta_m^{U_1}u_1\nabla W_{U_1}
		-
		\Theta_m^{U_2}u_2\nabla W_{U_2}
		\right\|_{L^p}
		\\
		&\le
		\left|
		\Theta_m^{U_1}
		-
		\Theta_m^{U_2}
		\right|
		\|u_1\nabla W_{U_1}\|_{L^p}
		\\
		&\quad
		+
		\Theta_m^{U_2}
		\|(u_1-u_2)\nabla W_{U_1}\|_{L^p}
		+
		\Theta_m^{U_2}
		\|u_2\nabla(W_{U_1}-W_{U_2})\|_{L^p}
		\\
		&\le
		C_{m}\bigl(1+T^{1-\gamma'}\bigr)
		\sup_{0\le s\le t}
		\left(
		\|u_1(s)-u_2(s)\|_{L^\infty}
		+
		\|v_1(s)-v_2(s)\|_{L^\infty}
		\right).
	\end{aligned}
	\]
	If
	\[
	R_t(U_1)\wedge R_t(U_2)\ge2m,
	\]
	then both cut-off factors vanish. If, for instance,
	\[
	R_t(U_1)>2m,
	\qquad
	R_t(U_2)\le2m,
	\]
	then $\Theta_m^{U_1}(t)=0$. Hence, using \eqref{eq:cutoff-difference} and \eqref{eq:WU-estimate-Linf},
	\[
	\begin{aligned}
		&\quad
		\left\|
		\Theta_m^{U_2}(t)u_2(t)\nabla W_{U_2}(t)
		\right\|_{L^p}
		\\
		&\le
		\left|
		\Theta_m^{U_2}(t)-\Theta_m^{U_1}(t)
		\right|
		\|u_2(t)\nabla W_{U_2}(t)\|_{L^p}
		\\
		&\le
		C_{m}\bigl(1+T^{1-\gamma'}\bigr)
		\sup_{0\le s\le t}
		\left(
		\|u_1(s)-u_2(s)\|_{L^\infty}
		+
		\|v_1(s)-v_2(s)\|_{L^\infty}
		\right).
	\end{aligned}
	\]
	The other mixed case is identical. Thus \eqref{eq:chemotaxis-u-difference} follows. The same argument gives
	\begin{equation}
		\label{eq:chemotaxis-v-difference}
		\begin{aligned}
			&\quad
			\left\|
			\Theta_m^{U_1}(t)v_1(t)\nabla W_{U_1}(t)
			-
			\Theta_m^{U_2}(t)v_2(t)\nabla W_{U_2}(t)
			\right\|_{L^p}
			\\
			&\le
			C_{m}\bigl(1+T^{1-\gamma'}\bigr)
			\sup_{0\le s\le t}
			\left(
			\|u_1(s)-u_2(s)\|_{L^\infty}
			+
			\|v_1(s)-v_2(s)\|_{L^\infty}
			\right).
		\end{aligned}
	\end{equation}

	By Lemma~\ref{lem:heat-semigroup} and \eqref{eq:chemotaxis-u-difference},
	\begin{equation}
		\label{eq:semigroup-chemotaxis-difference-u}
		\begin{aligned}
			&
			\sup_{0\le t\le T}
			\left\|
			\int_0^t
			e^{-(t-s)A}\nabla\cdot
			\left[
			\Theta_m^{U_1}(s)u_1(s)\nabla W_{U_1}(s)
			-
			\Theta_m^{U_2}(s)u_2(s)\nabla W_{U_2}(s)
			\right]ds
			\right\|_{L^\infty}
			\\
			&\quad\le
			C_{m}\bigl(1+T^{1-\gamma'}\bigr)T^{\frac{1}{2}-\gamma-\varepsilon}
			\sup_{0\le s\le T}
			\left(
			\|u_1(s)-u_2(s)\|_{L^\infty}
			+
			\|v_1(s)-v_2(s)\|_{L^\infty}
			\right).
		\end{aligned}
	\end{equation}
	The same estimate follows from \eqref{eq:chemotaxis-v-difference} for the $v$-equation.

	For the reaction terms, \eqref{eq:F1-difference} gives
	\begin{equation}
		\label{eq:semigroup-reaction-difference-u}
		\begin{aligned}
			&
			\sup_{0\le t\le T}
			\left\|
			\int_0^t
			e^{-(t-s)A}
			\left[
			\Theta_m^{U_1}(s)F_1(u_1(s),v_1(s))
			-
			\Theta_m^{U_2}(s)F_1(u_2(s),v_2(s))
			\right]ds
			\right\|_{L^\infty}
			\\
			&\quad\le
			C_m T
			\sup_{0\le s\le T}
			\left(
			\|u_1(s)-u_2(s)\|_{L^\infty}
			+
			\|v_1(s)-v_2(s)\|_{L^\infty}
			\right).
		\end{aligned}
	\end{equation}
	Similarly,
	\begin{equation}
		\label{eq:semigroup-reaction-difference-v}
		\begin{aligned}
			&
			\sup_{0\le t\le T}
			\left\|
			\int_0^t
			e^{-(t-s)A}
			\left[
			\Theta_m^{U_1}(s)F_2(v_1(s),u_1(s))
			-
			\Theta_m^{U_2}(s)F_2(v_2(s),u_2(s))
			\right]ds
			\right\|_{L^\infty}
			\\
			&\quad\le
			C_m T
			\sup_{0\le s\le T}
			\left(
			\|u_1(s)-u_2(s)\|_{L^\infty}
			+
			\|v_1(s)-v_2(s)\|_{L^\infty}
			\right).
		\end{aligned}
	\end{equation}

	Finally, Lemma~\ref{lem:stochastic-convolution} yields that for some $\rho>0$,
	\begin{equation}
		\label{eq:stoch-difference-u}
		\begin{aligned}
			&
			\mathbb E
			\sup_{0\le t\le T}
			\left\|
			\int_0^t
			e^{-(t-s)A}
			\bigl[
			\sigma_1(u_1(s))-\sigma_1(u_2(s))
			\bigr]\,dW_1(s)
			\right\|_{L^\infty}
			\\
			&\quad\le
			CT^\rho
			\mathbb E
			\sup_{0\le t\le T}
			\|u_1(t)-u_2(t)\|_{L^\infty}.
		\end{aligned}
	\end{equation}
	Likewise,
	\begin{equation}
		\label{eq:stoch-difference-v}
		\begin{aligned}
			&
			\mathbb E
			\sup_{0\le t\le T}
			\left\|
			\int_0^t
			e^{-(t-s)A}
			\bigl[
			\sigma_2(v_1(s))-\sigma_2(v_2(s))
			\bigr]\,dW_2(s)
			\right\|_{L^\infty}
			\\
			&\quad\le
			CT^\rho
			\mathbb E
			\sup_{0\le t\le T}
			\|v_1(t)-v_2(t)\|_{L^\infty}.
		\end{aligned}
	\end{equation}

	Combining \eqref{eq:semigroup-chemotaxis-difference-u}--\eqref{eq:stoch-difference-v}, we find
	\begin{equation}
		\label{eq:Phi-contraction}
		\|\Phi(U_1)-\Phi(U_2)\|_{S_T}
		\le
		C_{m}
		\left(
		\bigl(1+T^{1-\gamma'}\bigr)T^{\frac{1}{2}-\gamma-\varepsilon}+T+T^\rho
		\right)
		\|U_1-U_2\|_{S_T}.
	\end{equation}
	Choose $T_0>0$ sufficiently small such that
	\[
	C_{m}
	\left(
	\bigl(1+T_0^{1-\gamma'}\bigr)T_0^{\frac{1}{2}-\gamma-\varepsilon}+T_0+T_0^\rho
	\right)
	\le
	\frac12.
	\]
	Then $\Phi$ is a contraction on $S_{T_0}$. By Banach's fixed point theorem, there exists a unique $U=(u,v)\in S_{T_0}$ such that
	\[
	\Phi(U)=U.
	\]
	The fixed point has the mild formulation
	\begin{equation*}
		\begin{aligned}
			u(t)
			={}&
			e^{-tA}u_0
			-\chi_1
			\int_0^t
			e^{-(t-s)A}\nabla\cdot
			\left(
			\Theta_m^U(s)u(s)\nabla w(s)
			\right)ds
			\\
			&+
			\int_0^t
			e^{-(t-s)A}\Theta_m^U(s)F_1(u(s),v(s))\,ds
			+
			\int_0^t
			e^{-(t-s)A}\sigma_1(u(s))\,dW_1(s),
		\end{aligned}
	\end{equation*}
	\begin{equation*}
		\begin{aligned}
			v(t)
			={}&
			e^{-tA}v_0
			-\chi_2
			\int_0^t
			e^{-(t-s)A}\nabla\cdot
			\left(
			\Theta_m^U(s)v(s)\nabla w(s)
			\right)ds
			\\
			&+
			\int_0^t
			e^{-(t-s)A}\Theta_m^U(s)F_2(v(s),u(s))\,ds
			+
			\int_0^t
			e^{-(t-s)A}\sigma_2(v(s))\,dW_2(s),
		\end{aligned}
	\end{equation*}
	and
	\begin{equation*}
		w(t)
		=
		e^{-t(A+1)}w_0
		+
		\int_0^t
		e^{-(t-s)(A+1)}
		\bigl(
		\alpha u(s)+\beta v(s)
		\bigr)\,ds .
	\end{equation*}
	Hence $(u,v,w)$ is the unique mild solution of the cut-off system on $[0,T_0]$.

	The solution can be extended to any finite interval $[0,T]$ by the standard stepwise argument. Indeed, after solving the system on $[0,T_0]$, we repeat the same fixed point argument on $[T_0,2T_0]$, with $u(T_0),v(T_0),w(T_0)$ as initial data and with the cut-off factor still defined by the full past supremum
	\[
	\Theta_m^U(t)
	=
	\theta_m
	\left(
	\sup_{0\le s\le t}
	\left(
	\|u(s)\|_{L^\infty}
	+
	\|v(s)\|_{L^\infty}
	\right)
	\right).
	\]
	Whenever this cut-off factor is nonzero, the full supremum above is bounded by $2m$, so the same estimates remain valid on each short interval. Repeating the procedure finitely many times yields a unique mild solution on $[0,T]$.

	Finally, suppose that $u_0,v_0\ge0$. We prove that the solution is nonnegative. It is enough to prove the assertion for $u$, since the proof for $v$ is the same.

	For $r\in\mathbb R$, set
	\[
	r^+ := \max\{r,0\},
	\qquad
	r^- := \max\{-r,0\}.
	\]
	Let $\delta\in(0,1)$ and define
	\[
	g_\delta(r):=\frac{r^2}{\delta+r},
	\qquad r\in[0,\infty),
	\]
	and
	\[
	G_\delta(r):=g_\delta\bigl((r^-)^2\bigr),
	\qquad r\in\mathbb R.
	\]
	Then $G_\delta\in C^2(\mathbb R)$,
	\[
	G_\delta(r)=G_\delta'(r)=G_\delta''(r)=0,
	\qquad r\ge0,
	\]
	and there exists a constant $C>0$, independent of $\delta$, such that
	\[
	|G_\delta'(r)|\le C r^-,
	\qquad
	0\le G_\delta''(r)\le C,
	\qquad r\in\mathbb R.
	\]
	Moreover, as $\delta\to0$,
	\[
	G_\delta(r)\to (r^-)^2,
	\qquad
	G_\delta'(r)\to -2r^-,
	\qquad
	G_\delta''(r)\to 2\cdot\mathbf 1_{\{r<0\}}
	\]
	for a.e. $r\in\mathbb R$.

	Define
	\[
	\phi_\delta(f):=\int_{\mathcal O}G_\delta(f(x))\,dx,
	\qquad f\in L^2(\mathcal O).
	\]
	Then $\phi_\delta$ is twice G\^ateaux differentiable on $L^2(\mathcal O)$, and
	\[
	D\phi_\delta(f)h
	=
	\int_{\mathcal O}G_\delta'(f)h\,dx,
	\qquad
	D^2\phi_\delta(f)(h,h)
	=
	\int_{\mathcal O}G_\delta''(f)h^2\,dx.
	\]

	Since the solution is mild, we apply It\^o's formula to the Yosida approximations and then pass to the limit. After taking expectations, the martingale term vanishes and the resulting identity is
	\[
	\begin{aligned}
		&\quad\mathbb E\phi_\delta(u(t))
		+
		\mathbb E
		\int_0^t
		\int_{\mathcal O}
		G_\delta''(u)|\nabla u|^2\,dx\,ds
		\\
		&=
		\phi_\delta(u_0)
		+
		\chi_1
		\mathbb E
		\int_0^t
		\Theta_m^U(s)
		\int_{\mathcal O}
		G_\delta''(u)u\nabla u\cdot\nabla w\,dx\,ds
		\\
		&\quad
		+
		\mathbb E
		\int_0^t
		\Theta_m^U(s)
		\int_{\mathcal O}
		G_\delta'(u)F_1(u,v)\,dx\,ds
		\\
		&\quad
		+
		\frac12
		\mathbb E
		\int_0^t
		\int_{\mathcal O}
		G_\delta''(u)\|\sigma_1(u)\|_{\ell^2}^2\,dx\,ds .
	\end{aligned}
	\]
	Here we have used the homogeneous Neumann boundary condition to obtain
	\[
	\int_{\mathcal O}G_\delta'(u)\Delta u\,dx
	=
	-\int_{\mathcal O}G_\delta''(u)|\nabla u|^2\,dx,
	\]
	and
	\[
	-\int_{\mathcal O}G_\delta'(u)\nabla\cdot(u\nabla w)\,dx
	=
	\int_{\mathcal O}G_\delta''(u)u\nabla u\cdot\nabla w\,dx.
	\]
	Since $u_0\ge0$, we have
	\[
	\phi_\delta(u_0)=0.
	\]

	Letting $\delta\to0$ and using the dominated convergence theorem, we get
	\[
	\begin{aligned}
		&\quad\mathbb E\|u^-(t)\|_{L^2}^2
		+
		2\mathbb E
		\int_0^t
		\|\nabla u^-(s)\|_{L^2}^2\,ds
		\\
		&=
		2\chi_1
		\mathbb E
		\int_0^t
		\Theta_m^U(s)
		\int_{\mathcal O}
		u^-(s)\nabla u^-(s)\cdot\nabla w(s)\,dx\,ds
		\\
		&\quad
		-2
		\mathbb E
		\int_0^t
		\Theta_m^U(s)
		\int_{\mathcal O}
		u^-(s)F_1(u(s),v(s))\,dx\,ds
		\\
		&\quad
		+
		\mathbb E
		\int_0^t
		\int_{\{u(s)<0\}}
		\|\sigma_1(u(s))\|_{\ell^2}^2\,dx\,ds .
	\end{aligned}
	\]
	On the set $\{u<0\}$, one has $u=-u^-$ and $\nabla u=-\nabla u^-$, hence
	\[
	u\nabla u=u^-\nabla u^-.
	\]

	On the support of $\Theta_m^U$, the mild formula for $w$ and the $L^\infty$ bound on $u$ and $v$ give
	\[
	\|\nabla w(s)\|_{L^\infty}
	\le C_{m,T}.
	\]
	Therefore, Young's inequality gives
	\begin{equation}
		\label{eq:nonneg-chemo}
	\begin{aligned}
		&\quad2\chi_1
		\Theta_m^U(s)
		\left|
		\int_{\mathcal O}
		u^-\nabla u^-\cdot\nabla w\,dx
		\right|
		\\
		&\le
		\|\nabla u^-\|_{L^2}^2
		+
		C\Theta_m^U(s)
		\|\nabla w(s)\|_{L^\infty}^2
		\|u^-(s)\|_{L^2}^2
		\\
		&\le
		\|\nabla u^-\|_{L^2}^2
		+
		C_m
		\|u^-(s)\|_{L^2}^2 .
	\end{aligned}
	\end{equation}

	Next, on the set $\{u<0\}$, we have $u_+=0$. Therefore,
	\[
	F_1(u,v)
	=
	\mu_1u\left(1-a_1v_+^{\eta_1}\right).
	\]
	Since $u=-u^-$ on $\{u<0\}$, we obtain
	\[
	-2u^-F_1(u,v)
	=
	2\mu_1(u^-)^2
	\left(1-a_1v_+^{\eta_1}\right)
	\le
	2\mu_1(u^-)^2.
	\]
	Hence
	\begin{equation}
	\label{eq:nonneg-compete}
	-2
	\Theta_m^U(s)
	\int_{\mathcal O}
	u^-F_1(u,v)\,dx
	\le
	C\|u^-(s)\|_{L^2}^2.
	\end{equation}	
	Since $\sigma_1(0)=0$ and $\sigma_1$ is Lipschitz as an $\ell^2$-valued mapping, we have
	\[
	\|\sigma_1(r)\|_{\ell^2}
	\le
	K|r|,
	\qquad r\in\mathbb R.
	\]
	Therefore,
	\begin{equation} \label{eq:nonneg-stochastic}
	\int_{\{u<0\}}
	\|\sigma_1(u)\|_{\ell^2}^2\,dx
	\le
	C\|u^-\|_{L^2}^2.		
	\end{equation}

	Combining the above estimates \eqref{eq:nonneg-chemo}--\eqref{eq:nonneg-stochastic} and absorbing the gradient term, we get
	\[
	\mathbb E\|u^-(t)\|_{L^2}^2
	\le
		C_{m,T}
	\mathbb E\int_0^t
	\|u^-(s)\|_{L^2}^2\,ds.
	\]
	Gronwall's lemma implies
	\[
	\mathbb E\|u^-(t)\|_{L^2}^2=0,
	\qquad t\in[0,T].
	\]
	Thus,
	\[
	u^-(t)=0
	\quad\text{a.e. in }\mathcal O,\quad \mathbb P\text{-a.s.}
	\]
	for every fixed $t\in[0,T]$. Since $u$ has continuous trajectories in $C(\overline{\mathcal O})$, a standard argument based on a countable dense subset of $[0,T]$ yields
	\[
	u(t,x)\ge0,
	\qquad
	(t,x)\in[0,T]\times\mathcal{O},
	\quad \mathbb P\text{-a.s.}
	\]

	The proof for $v$ is identical. Applying It\^o's formula to $\phi_\delta(v(t))$ and passing to the limit $\delta\to0$, we obtain
	\[
	\begin{aligned}
		&\quad\mathbb E\|v^-(t)\|_{L^2}^2
		+
		2\mathbb E
		\int_0^t
		\|\nabla v^-(s)\|_{L^2}^2\,ds
		\\
		&=
		2\chi_2
		\mathbb E
		\int_0^t
		\Theta_m^U(s)
		\int_{\mathcal O}
		v^-(s)\nabla v^-(s)\cdot\nabla w(s)\,dx\,ds
		\\
		&\quad
		-2
		\mathbb E
		\int_0^t
		\Theta_m^U(s)
		\int_{\mathcal O}
		v^-(s)F_2(v(s),u(s))\,dx\,ds
		\\
		&\quad
		+
		\mathbb E
		\int_0^t
		\int_{\{v(s)<0\}}
		\|\sigma_2(v(s))\|_{\ell^2}^2\,dx\,ds .
	\end{aligned}
	\]
	On the set $\{v<0\}$, we have $v_+=0$, and hence
	\[
	F_2(v,u)
	=
	\mu_2v\left(1-a_2u_+^{\eta_2}\right).
	\]
	Therefore,
	\[
	-2v^-F_2(v,u)
	=
	2\mu_2(v^-)^2
	\left(1-a_2u_+^{\eta_2}\right)
	\le
	2\mu_2(v^-)^2.
	\]
	The chemotaxis term and the stochastic correction term are estimated in the same way as for $u$. Hence
	\[
	\mathbb E\|v^-(t)\|_{L^2}^2
	\le
		C_{m,T}\mathbb E
	\int_0^t	\|v^-(s)\|_{L^2}^2\,ds.
	\]
	By Gronwall's lemma,
	\[
	\mathbb E\|v^-(t)\|_{L^2}^2=0,
	\qquad t\in[0,T].
	\]
	Using the continuity of $v$ in $C(\overline{\mathcal O})$, we obtain
	\[
	v(t,x)\ge0,
	\qquad
	(t,x)\in[0,T]\times\mathcal{O},
	\quad \mathbb P\text{-a.s.}
	\]
	Therefore,
	\[
	u(t,x)\ge0,
	\qquad
	v(t,x)\ge0,
	\qquad
	(t,x)\in[0,T]\times\mathcal{O},
	\quad \mathbb P\text{-a.s.}
	\]
	The Neumann heat semigroup preserves nonnegativity. The mild formula for $w$, together with $w_0,u,v\ge0$, therefore gives $w(t,x)\ge0$ on $[0,T]\times\mathcal O$, almost surely.

	The proof is complete.
\end{proof}

\subsection{Uniqueness of local solutions}

In this subsection, we prove the uniqueness of local mild solutions to the original system. This result will be used later to identify the solutions obtained from different cut-off levels.

\begin{theorem}
	\label{thm:local-uniqueness}
	Assume that the coefficients $\sigma_i$, $i=1,2$, satisfy the Lipschitz condition stated above. Assume also that $F_1$ and $F_2$ are locally Lipschitz continuous on bounded subsets of $\mathbb R^2$. Let
	\[
	(u_1,v_1,w_1,\tau_1)
	\quad\text{and}\quad
	(u_2,v_2,w_2,\tau_2)
	\]
	be two local mild solutions to system \eqref{eq:main} with the same initial data $(u_0,v_0,w_0)$. Set
	\[
	\tau:=\tau_1\wedge\tau_2.
	\]
	Then
	\[
	u_1(t)=u_2(t),\qquad
	v_1(t)=v_2(t),\qquad
	w_1(t)=w_2(t),
	\]
	for all $t\in[0,\tau)$, almost surely.
\end{theorem}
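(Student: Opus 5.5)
The plan is to localize by stopping times so that both solutions are bounded, and then run the same difference estimates as in the contraction step of Theorem~\ref{thm:cutoff-existence}. The only change is that the cut-off factor is replaced by an indicator of a stopping-time interval.

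For $R>0$ define
\[
\sigma_R:=\inf\Bigl\{t\in[0,\tau):\ \|u_1(t)\|_{L^\infty}+\|v_1(t)\|_{L^\infty}+\|u_2(t)\|_{L^\infty}+\|v_2(t)\|_{L^\infty}\ge R\Bigr\}\wedge\tau .
\]
Since the trajectories are continuous in $C(\overline{\mathcal O})$ on $[0,\tau)$, we have $\sigma_R\uparrow\tau$ almost surely as $R\to\infty$. Hence it suffices to prove $U_1=U_2$ on $[0,\sigma_R\wedge T]$ for every fixed $R$ and $T$.

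On $[0,\sigma_R]$ both solutions lie in the ball of radius $R$, and $F_1,F_2$ are Lipschitz there with a constant $C_R$. Set
\[
D(t):=\sup_{s\le t}\bigl(\|u_1-u_2\|_{L^\infty}+\|v_1-v_2\|_{L^\infty}\bigr).
\]
By the argument for \eqref{eq:WU-difference} and \eqref{eq:WU-estimate-Linf}, the signal difference satisfies $\|\nabla(w_1-w_2)(t)\|_{L^p}\le C t^{1-\gamma'} D(t)$ for $t\le\sigma_R$, and $\|\nabla w_i(t)\|_{L^p}\le C_{R,T}$ (the $w_0$ contribution cancels in the difference). Write the mild formula for $u_1-u_2$ at time $t\wedge\sigma_R$. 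The stochastic convolution does not stop nicely, so I will use the standard device of replacing the integrands by $\mathbf 1_{[0,\sigma_R]}(s)(\dots)$: the processes $\tilde u_i:=u_i(\cdot\wedge\sigma_R)$ agree with $u_i$ up to $\sigma_R$, and the stopped integrands are globally controlled. I will then estimate the three kinds of terms separately:
\begin{enumerate}
\item the chemotaxis term, via Lemma~\ref{lem:heat-semigroup}(iii) with $\frac1p<\gamma<\frac12$, as in \eqref{eq:semigroup-chemotaxis-difference-u}, using
\[
u_1\nabla w_1-u_2\nabla w_2=(u_1-u_2)\nabla w_1+u_2\nabla(w_1-w_2);
\]
\item the reaction term, as in \eqref{eq:semigroup-reaction-difference-u};
\item the noise term, by Lemma~\ref{lem:stochastic-convolution} with the stopping time $\sigma_R$ and the Lipschitz bound on $\sigma_i$, as in \eqref{eq:stoch-difference-u}.
\end{enumerate}
Combining these gives, for small $T_0$ depending only on $R$,
\[
\mathbb E D(T_0\wedge\sigma_R)\le C_R\bigl(T_0^{\frac12-\gamma-\varepsilon}+T_0+T_0^\rho\bigr)\,\mathbb E D(T_0\wedge\sigma_R)\le\tfrac12\,\mathbb E D(T_0\wedge\sigma_R).
\]
Since $\mathbb E D(T_0\wedge\sigma_R)\le 2R<\infty$, we can absorb and conclude $D(T_0\wedge\sigma_R)=0$ a.s.

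To reach $[0,T\wedge\sigma_R]$, I will iterate over intervals $[kT_0,(k+1)T_0]$, using the agreement already obtained on earlier intervals. The step length $T_0$ depends only on $R$, so finitely many steps suffice. A cleaner alternative is a Gronwall-type version: bound $\mathbb E D(t\wedge\sigma_R)$ by $C_R\int_0^t (t-s)^{-\kappa}\mathbb E D(s\wedge\sigma_R)\,ds$ and apply the singular Gronwall lemma, where $\kappa=\frac12+\gamma+\varepsilon<1$.

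Once $u_1=u_2$ and $v_1=v_2$ hold, the mild formula for $w$ gives $w_1=w_2$ directly, since $w$ is determined linearly by $(u,v)$ and $w_0$. Letting $R\to\infty$ and $T\to\infty$ along countable sequences yields the claim on $[0,\tau)$.

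I expect the main obstacle to be the stochastic convolution term with the stopping time. Lemma~\ref{lem:stochastic-convolution} bounds the supremum up to $T\wedge\tau$ in terms of the integrand up to $T\wedge\tau$, but the mild formula evaluated at $t\wedge\sigma_R$ is not literally the convolution of the stopped integrand. To handle this, I would first compare $u_i$ on $[0,\sigma_R]$ with the convolution of $\mathbf 1_{[0,\sigma_R]}\sigma_i(u_i)$, which coincides with the original one for $t\le\sigma_R$ by the local property of the stochastic integral. Only then would I apply the lemma.
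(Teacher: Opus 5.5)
Your argument is sound, modulo one small repair below, but it takes a more hands-on route than the paper. The paper redoes no estimates. With $\tau_R$ the exit time of both solutions from the $L^\infty$-ball of radius $R$ and an integer $m>R$, it observes that $\Theta_m^{(u_i,v_i)}\equiv1$ on $[0,T\wedge\tau_R]$. Both solutions therefore solve the $m$-cut-off system there, and the paper invokes the uniqueness part of Theorem~\ref{thm:cutoff-existence} ``applied to the stopped processes''. This is shorter and reuses the cut-off machinery that Section~3.3 needs anyway. It is not, however, a literal application. Uniqueness in Theorem~\ref{thm:cutoff-existence} is for processes in $S_T$ that satisfy the cut-off mild equation on all of $[0,T]$, and the stopped local solutions do not: after $\tau_R$ they are frozen, and the original solutions need not exist past $\tau$. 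Making that step rigorous requires exactly what you do. You insert $\mathbf 1_{[0,\sigma_R]}$ into the stochastic convolution, use the local property of the It\^o integral to identify it with the original convolution for $t\le\sigma_R$, and apply Lemma~\ref{lem:stochastic-convolution} to the globally defined bounded integrand. Your proof is thus self-contained and makes explicit the step the paper leaves implicit.

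The repair: the bound $\|\nabla w_i(t)\|_{L^p}\le C_{R,T}$ is not correct as stated. The mild formula gives $C(\|w_0\|_{W^{1,\infty}}+R\,T^{1-\gamma'})$, and $\|w_0\|_{W^{1,\infty}}$ is random and unbounded under Assumption~\ref{ass:initial-data}, while $\sigma_R$ controls only $u_i,v_i$. Because you absorb in expectation, the random factor multiplying $D$ in the term $(u_1-u_2)\nabla w_1$ cannot be pulled out. There are two easy fixes. You can set $\sigma_R:=0$ on the $\mathcal F_0$-measurable event $\{\|w_0\|_{W^{1,\infty}}>R\}$; this is still a stopping time and still $\sigma_R\uparrow\tau$. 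Alternatively, add $\|w_1(t)\|_{W^{1,q}}+\|w_2(t)\|_{W^{1,q}}$ to the exit criterion, as the paper does with $\rho_{k,R}$ in Section~\ref{sec:finite-time-continuous-dependence}. The paper's constant $C_m$ in Theorem~\ref{thm:cutoff-existence} silently carries the same dependence.

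If you use the singular Gronwall variant, note that the noise term produces $\mathbb E\bigl(\int_0^t D(s)^3\,ds\bigr)^{1/3}$ rather than $\int_0^t\mathbb E D(s)\,ds$. Since $D$ is nondecreasing, the splitting $\bigl(\int_0^t D^3\,ds\bigr)^{1/3}\le\epsilon D(t)+C_\epsilon\int_0^t D\,ds$ puts it into the required form.
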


\begin{proof}
	Let
	\[
	\tau:=\tau_1\wedge\tau_2.
	\]
	For $R>0$, define
	\[
	\tau_R
	:=
	\inf\left\{
	t\ge0:
	\max_{i=1,2}
	\sup_{0\le s\le t}
	\left(
	\|u_i(s)\|_{L^\infty}
	+
	\|v_i(s)\|_{L^\infty}
	\right)
	\ge R
	\right\}
	\wedge\tau.
	\]
	Fix $T>0$ and choose an integer $m>R$. On
	$[0,T\wedge\tau_R]$,
	\[
	R_t(u_i,v_i)\le R<m,
	\qquad i=1,2,
	\]
	and hence
	\[
	\Theta_m^{(u_i,v_i)}(t)=1.
	\]
	Therefore, up to $T\wedge\tau_R$, both
	$(u_1,v_1,w_1)$ and $(u_2,v_2,w_2)$ solve the same
	$m$-cut-off system with the same initial data. The uniqueness
	part of Theorem~\ref{thm:cutoff-existence}, applied to the
	stopped processes, gives
	\[
	u_1=u_2,\qquad v_1=v_2,\qquad w_1=w_2
	\quad\text{on }[0,T\wedge\tau_R]
	\]
	almost surely.
	
	Since $T>0$ is arbitrary, the equality holds on
	$[0,\tau_R]$. Letting $R\to\infty$ and using
	$\tau_R\uparrow\tau$ proves the result.
\end{proof}

\subsection{Construction of the maximal local solution}

For each $m\in\mathbb N$, let $X_m=(u_m,v_m,w_m)$ be the global solution of the cut-off system~\eqref{eq:cutoff-system}, and write $U_m=(u_m,v_m)$. Define
\[
\tau_m
:=
\inf\{t\ge0:R_t(U_m)\ge m\}.
\]
Before $\tau_m$, the cut-off factor is equal to one, so $X_m$ solves the original system~\eqref{eq:main}. If $m_2>m_1$, Theorem~\ref{thm:local-uniqueness} shows that $X_{m_2}=X_{m_1}$ up to $\tau_{m_1}$.

Set
\[
\tau:=\lim_{m\to\infty}\tau_m
\]
and define $X(t):=X_m(t)$ whenever $t\le\tau_m$. Thus $X=(u,v,w)$ is an adapted nonnegative mild solution of~\eqref{eq:main} on $[0,\tau)$, and Theorem~\ref{thm:local-uniqueness} gives uniqueness. Hence $(X,\tau)$ is the unique maximal local mild solution.

\section{Global Existence}

In this section, we prove that the maximal local solution constructed in Section~3 is global. We assume throughout that both species are subject to nonlinear self-damping:

\begin{assumption}\label{ass:self-damping}
	\begin{equation}
		\zeta_1>2,\qquad \zeta_2>2,\qquad\eta_1\ge1,\qquad \eta_2\ge1.
	\end{equation}
\end{assumption}

By Section~3, system~\eqref{eq:main} admits a maximal local mild solution $(u,v,w,\tau)$. Recall the stopping times

\[
\tau_m
:=
\inf\left\{
t\ge0:
\sup_{0\le s\le t}
\left(
\|u(s)\|_{L^\infty}
+
\|v(s)\|_{L^\infty}
\right)
\ge m
\right\},
\]
and $\tau_m\uparrow\tau$ almost surely. Our aim is to prove that $\tau=\infty$ almost surely.

We first record the stopped $L^p$ It\^o formula for the present system. The rigorous justification follows from the same Yosida approximation argument as in \cite[Theorem~4.1]{CHEN2025113531}; hence we only state the formula and then carry out the estimates needed here.

\begin{lemma}[$L^p$ It\^o formula]
	\label{lem:Lp-Ito-global}
	Let $p\ge2$ and $T>0$. Then for every $m\in\mathbb N$ and every $t\in[0,T]$, the following identities hold almost surely:
	\[
	\begin{aligned}
		\|u(t\wedge\tau_m)\|_{L^p}^p
		&-
		\|u_0\|_{L^p}^p
		+
		p(p-1)
		\int_0^{t\wedge\tau_m}
		\int_{\mathcal O}
		u^{p-2}|\nabla u|^2\,dx\,ds
		\\
		={}&
		p(p-1)\chi_1
		\int_0^{t\wedge\tau_m}
		\int_{\mathcal O}
		u^{p-1}\nabla u\cdot\nabla w\,dx\,ds
		\\
		&\quad+
		p\mu_1
		\int_0^{t\wedge\tau_m}
		\int_{\mathcal O}
		u^p\,dx\,ds
		-
		p\mu_1
		\int_0^{t\wedge\tau_m}
		\int_{\mathcal O}
		u^{p+\zeta_1}\,dx\,ds
		\\
		&\quad-
		p\mu_1a_1
		\int_0^{t\wedge\tau_m}
		\int_{\mathcal O}
		u^p v^{\eta_1}\,dx\,ds
		\\
		&\quad+
		p\sum_{k=1}^{\infty}
		\int_0^{t\wedge\tau_m}
		\int_{\mathcal O}
		u^{p-1}\sigma^1_{k}(u)\,dx\,dW_k^1(s)
		\\
		&\quad+
		\frac{p(p-1)}{2}
		\int_0^{t\wedge\tau_m}
		\int_{\mathcal O}
		u^{p-2}\|\sigma_1(u)\|_{\ell^2}^2\,dx\,ds ,
	\end{aligned}
	\]
	and
	\[
	\begin{aligned}
		\|v(t\wedge\tau_m)\|_{L^p}^p
		&-
		\|v_0\|_{L^p}^p
		+
		p(p-1)
		\int_0^{t\wedge\tau_m}
		\int_{\mathcal O}
		v^{p-2}|\nabla v|^2\,dx\,ds
		\\
		=&
		p(p-1)\chi_2
		\int_0^{t\wedge\tau_m}
		\int_{\mathcal O}
		v^{p-1}\nabla v\cdot\nabla w\,dx\,ds
		\\
		&\quad+
		p\mu_2
		\int_0^{t\wedge\tau_m}
		\int_{\mathcal O}
		v^p\,dx\,ds
		-
		p\mu_2
		\int_0^{t\wedge\tau_m}
		\int_{\mathcal O}
		v^{p+\zeta_2}\,dx\,ds
		\\
		&\quad-
		p\mu_2a_2
		\int_0^{t\wedge\tau_m}
		\int_{\mathcal O}
		v^p u^{\eta_2}\,dx\,ds
		\\
		&\quad+
		p\sum_{k=1}^{\infty}
		\int_0^{t\wedge\tau_m}
		\int_{\mathcal O}
		v^{p-1}\sigma^2_{k}(v)\,dx\,dW_k^2(s)
		\\
		&\quad+
		\frac{p(p-1)}{2}
		\int_0^{t\wedge\tau_m}
		\int_{\mathcal O}
		v^{p-2}\|\sigma_2(v)\|_{\ell^2}^2\,dx\,ds .
	\end{aligned}
	\]
\end{lemma}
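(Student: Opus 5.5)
The identities are the formal It\^o formula for the functional $f\mapsto\int_{\mathcal O}|f|^p\,dx$, applied to the $u$- and $v$-equations and stopped at $\tau_m$. Since $u,v$ are only mild solutions, I would justify them by regularization, following \cite[Theorem~4.1]{CHEN2025113531}. I give the argument for $u$; the $v$ case only changes indices.

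\textbf{Step 1 (regularization).} Work up to $\tau_m$, where $X$ agrees with the $m$-cut-off solution $X_m$ of Theorem~\ref{thm:cutoff-existence} and $\Theta_m^{U_m}=1$. Set $J_\lambda:=\lambda(\lambda+A)^{-1}$ and $u^\lambda:=J_\lambda u$. Applying $J_\lambda$ to the mild formula shows that $u^\lambda$ is a strong solution in $L^2(\mathcal O)$, with values in $D(A)$:
\[
du^\lambda=\bigl[\Delta u^\lambda-\chi_1J_\lambda\nabla\cdot(u\nabla w)+J_\lambda F_1(u,v)\bigr]dt+J_\lambda\sigma_1(u)\,dW_1 .
\]
The chemotactic term $J_\lambda\nabla\cdot(u\nabla w)$ is understood as $\lambda A^{1/2+\varepsilon}(\lambda+A)^{-1}A^{-1/2-\varepsilon}\nabla\cdot(\cdot)$. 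Up to $\tau_m$ we have $\|u\|_\infty+\|v\|_\infty\le m$, and $\nabla w$ is bounded in $L^q$, $q>n$, by \eqref{eq:WU-estimate-Linf}. So all drift and diffusion coefficients are bounded in the norms needed for the classical It\^o formula for $\Psi(f)=\int|f|^p$. This $\Psi$ is $C^2$ on $L^{p}$ for $p\ge2$ (in the Fr\'echet sense on $L^\infty\cap L^2$, which suffices because the processes are bounded).

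\textbf{Step 2 (It\^o for the regularized process).} Apply It\^o's formula to $\|u^\lambda(t\wedge\tau_m)\|_p^p$. Integrating by parts with the Neumann condition, the Laplacian produces $-p(p-1)\int|u^\lambda|^{p-2}|\nabla u^\lambda|^2$. The chemotactic term produces $p(p-1)\chi_1\int|u^\lambda|^{p-2}\nabla u^\lambda\cdot J_\lambda(u\nabla w)$, once $J_\lambda$ is commuted past the divergence in the weak form. The reaction term, the martingale term and the It\^o correction $\frac{p(p-1)}2\int|u^\lambda|^{p-2}\|J_\lambda\sigma_1(u)\|_{\ell^2}^2$ appear directly.

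\textbf{Step 3 (limit $\lambda\to\infty$).} This is the main obstacle, because of the gradient terms. I would pass to the limit in three steps.
\begin{itemize}
\item The $u^\lambda$ are uniformly bounded and $u^\lambda\to u$ in $C(\overline{\mathcal O})$, uniformly on $[0,T\wedge\tau_m]$ (strong continuity of $J_\lambda$ on $C(\overline{\mathcal O})$, plus compactness of the path). This handles $\|u^\lambda\|_p^p$, the reaction term, and, by dominated convergence, the It\^o correction.
\item For the stochastic integral, use the Burkholder--Davis--Gundy inequality with the Lipschitz bound on $\sigma_1$; it converges in probability.
\item For the gradient terms, I would first get a uniform bound on $\int_0^{t\wedge\tau_m}\|\nabla u^\lambda\|_2^2$ from the $p=2$ identity, absorbing the chemotactic term by Young's inequality as in \eqref{eq:nonneg-chemo}. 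This gives $\nabla u\in L^2$ and $\nabla u^\lambda\to\nabla u$ in $L^2(0,t\wedge\tau_m;L^2)$, via the standard energy-equality argument: the identity for $\|u^\lambda-u^\mu\|_2^2$ is Cauchy. Combined with the uniform convergence of $|u^\lambda|^{p-2}$ and the convergence of $J_\lambda(u\nabla w)\to u\nabla w$ in $L^2$, this passes both the dissipation term and the cross term to the limit.
\end{itemize}

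\textbf{Step 4 (conclusion).} By Theorem~\ref{thm:cutoff-existence}, $u\ge0$, so $|u|^{p-2}u=u^{p-1}$. Also $F_1(u,v)=\mu_1u(1-u^{\zeta_1}-a_1v^{\eta_1})$, and multiplying by $pu^{p-1}$ gives the three reaction integrals $p\mu_1\int u^p$, $-p\mu_1\int u^{p+\zeta_1}$ and $-p\mu_1a_1\int u^pv^{\eta_1}$. The resulting identity holds for each fixed $t$ almost surely, which is exactly the claimed formula for $u$. The same argument with $\chi_2,\mu_2,\zeta_2,a_2,\eta_2,\sigma_2,W_2$ gives the formula for $v$.
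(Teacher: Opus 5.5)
Your proposal is correct and takes the same route as the paper. The paper gives no details and simply defers to the Yosida-approximation argument of \cite[Theorem~4.1]{CHEN2025113531}, which is the regularize / apply It\^o / pass to the limit scheme you spell out, including the use of $u,v\ge0$ from Theorem~\ref{thm:cutoff-existence} to write $|u|^{p-2}u=u^{p-1}$.

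One small correction to Step~2: on a bounded domain the Neumann resolvent does not commute with $\nabla$. The regularized chemotactic term should therefore be written with $J_\lambda$ on the test function, as $p\chi_1\int_{\mathcal O}u\nabla w\cdot\nabla J_\lambda(|u^\lambda|^{p-2}u^\lambda)\,dx$. Equivalently, replace $J_\lambda(u\nabla w)$ by the field $u\nabla w+\nabla(\lambda+A)^{-1}\nabla\cdot(u\nabla w)$. Either form still converges as you claim, because $J_\lambda$ is a contraction in the $W^{1,2}(\mathcal O)$-seminorm and converges strongly to the identity there.
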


The step toward proving global existence is the following a priori bound. It shows that, for every $p>2$, the two population components are uniformly bounded in $L^p(\mathcal O)$ with respect to the stopping time $\tau_m$.

\begin{lemma}
	\label{lem:Lp-bound-global}
	
	Under Assumption~\ref{ass:self-damping}, for every $p>2$ and $T>0$, there exists a constant $C_{p,T}>0$, independent of $m$, such that
	\[
	\mathbb E
	\sup_{0\le t\le T\wedge\tau_m}
	\left(
	\|u(t)\|_{L^p}^p
	+
	\|v(t)\|_{L^p}^p
	\right)
	\le
	C_{p,T}.
	\]
	Moreover,
	\[
	\mathbb E
	\int_0^{T\wedge\tau_m}
	\int_{\mathcal O}
	\left(
	u^{p+\zeta_1}
	+
	v^{p+\zeta_2}
	\right)\,dx\,dt
	\le
	C_{p,T}.
	\]

\end{lemma}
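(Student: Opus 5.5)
We start from the stopped $L^p$ It\^o identities of Lemma~\ref{lem:Lp-Ito-global}, add the equations for $u$ and $v$, and estimate every term on the right-hand side by the damping integrals $\int u^{p+\zeta_1}$, $\int v^{p+\zeta_2}$ plus harmless lower-order terms. The competition terms $-p\mu_1a_1\int u^pv^{\eta_1}$ and $-p\mu_2a_2\int v^pu^{\eta_2}$ are nonpositive by nonnegativity (Theorem~\ref{thm:cutoff-existence}), so we drop them. The logistic term $p\mu_i\int u^p$ and the It\^o correction, which by Assumption~\ref{ass:stochastic-sigma} is bounded by $C_p\int u^{p-2}(1+u)^2\le C_p\int(1+u^p)$, are absorbed by Young's inequality into $\frac{p\mu_1}{4}\int u^{p+\zeta_1}+C$.

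The chemotactic term is the main obstacle. We first use Young's inequality:
\[
p(p-1)\chi_1\int u^{p-1}\nabla u\cdot\nabla w\le \frac{p(p-1)}2\int u^{p-2}|\nabla u|^2+\frac{p(p-1)\chi_1^2}{2}\int u^p|\nabla w|^2 .
\]
The gradient part is absorbed by the dissipation on the left. For the remaining term we apply Young's inequality with exponents $\frac{p+\zeta_1}{p}$ and $\frac{p+\zeta_1}{\zeta_1}$:
\[
\int u^p|\nabla w|^2\le \varepsilon\int u^{p+\zeta_1}+C_\varepsilon\int|\nabla w|^{2(p+\zeta_1)/\zeta_1}.
\]
Since $\zeta_1>2$, the exponent $r:=2(p+\zeta_1)/\zeta_1<p+\zeta_1$, and in fact $r<p+2$. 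It therefore suffices to bound $\int_0^{t}\|\nabla w\|_{L^r}^r\,ds$ by $\delta\int_0^t\int(u^{p+\zeta_1}+v^{p+\zeta_2})+C$.

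For this we use the mild formula for $w$ together with Lemma~\ref{lem:heat-semigroup}(iv) and the embedding $D((A+1)^{\gamma'})\hookrightarrow W^{1,r}$, which gives
\[
\|\nabla w(t)\|_{L^r}\le C\|w_0\|_{W^{1,\infty}}+C\int_0^t(t-s)^{-\gamma'}e^{-\nu(t-s)}\|\alpha u+\beta v\|_{L^r}\,ds .
\]
A Young convolution (or maximal-regularity) inequality in time then yields $\int_0^t\|\nabla w\|_r^r\le C_T(1+\|w_0\|_{W^{1,\infty}}^r)+C_T\int_0^t(\|u\|_r^r+\|v\|_r^r)$. Because $r<p+\zeta_i$, the last terms are again absorbed by $\delta\int(u^{p+\zeta_1}+v^{p+\zeta_2})+C$. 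The coefficient in front of $\|u\|_r^r$ does not matter, since the power gap lets us choose $\delta$ freely. Here $\zeta_i>2$ is exactly what makes $r<p+\zeta_i$ hold with room to spare; the cross terms from $v$ in the $u$ equation also need $r<p+\zeta_2$, which likewise follows from $\zeta_1>2$ once the exponents are compared. If that comparison fails for very different $\zeta_1,\zeta_2$, we would instead apply Young's inequality with the smaller of the two $\zeta$'s. The $v$ equation is treated symmetrically.

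Collecting these bounds, we obtain pathwise
\[
\|u(t\wedge\tau_m)\|_p^p+\|v(t\wedge\tau_m)\|_p^p+\frac{p\mu}{2}\int_0^{t\wedge\tau_m}\!\!\int(u^{p+\zeta_1}+v^{p+\zeta_2})\le \|u_0\|_p^p+\|v_0\|_p^p+C(1+\|w_0\|_{W^{1,\infty}}^r)+C\,t+M(t\wedge\tau_m),
\]
where $M$ is the sum of the stochastic integrals. All constants are independent of $m$, because only the structure of the equations is used, never the cut-off bound. Taking the supremum over $t\le T$ and then expectations, we apply the Burkholder--Davis--Gundy inequality to $M$:
\[
\mathbb E\sup|M|\le C\,\mathbb E\Bigl(\int_0^{T\wedge\tau_m}\Bigl(\int u^{p-1}(1+u)\Bigr)^2ds\Bigr)^{1/2}\le \tfrac12\mathbb E\sup_t\|u\|_p^p+C\,\mathbb E\int_0^{T\wedge\tau_m}(1+\|u\|_p^p)\,ds .
\]
The stopping at $\tau_m$ guarantees that all quantities are finite, so the absorption is legitimate. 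Assumption~\ref{ass:initial-data} bounds the initial moments, and Gronwall's lemma then gives both bounds claimed in Lemma~\ref{lem:Lp-bound-global}.
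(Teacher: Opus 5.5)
Your proof is correct and follows essentially the same route as the paper: sum the stopped It\^o identities, drop the competition terms, apply Young's inequality to the chemotactic term, bound $\nabla w$ in $L^r_{t,x}$ via the semigroup and Young's convolution inequality, absorb into the superquadratic damping, then finish with BDG and Gronwall. The one difference is cosmetic: you pair $u^p$ with $|\nabla w|^2$ by Young at $r=2(p+\zeta_1)/\zeta_1$, where the paper uses H\"older at $r=p+2$. Your hedge about the cross term is unnecessary, since $r<p+2<p+\zeta_2$ always holds under Assumption~\ref{ass:self-damping}.
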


\begin{proof}

	For any $m\ge1$ and $t\in[0,T]$, set 
	\[
	Y_p(t):=\|u(t)\|_{L^p}^p+\|v(t)\|_{L^p}^p.
	\]
	Applying Lemma~\ref{lem:Lp-Ito-global} to $u$ and $v$, respectively, and adding the resulting identities, we obtain
	\begin{equation}\label{eq:complete-Lp-energy-identity}
	\begin{aligned}
	Y_p(t\wedge\tau_m)
	&+p(p-1)\int_0^{t\wedge\tau_m}\!\int_{\mathcal O}
	\left(u^{p-2}|\nabla u|^2+v^{p-2}|\nabla v|^2\right)\,dx\,ds\\
	&+p\mu_1\int_0^{t\wedge\tau_m}\!\int_{\mathcal O}u^{p+\zeta_1}\,dx\,ds
	+p\mu_2\int_0^{t\wedge\tau_m}\!\int_{\mathcal O}v^{p+\zeta_2}\,dx\,ds\\
	&+p\mu_1a_1\int_0^{t\wedge\tau_m}\!\int_{\mathcal O}u^p v^{\eta_1}\,dx\,ds
	+p\mu_2a_2\int_0^{t\wedge\tau_m}\!\int_{\mathcal O}v^p u^{\eta_2}\,dx\,ds\\
 	&=Y_p(0)\\
	&+p\mu_1\int_0^{t\wedge\tau_m}\!\int_{\mathcal O}u^p\,dx\,ds + p\mu_2\int_0^{t\wedge\tau_m}\!\int_{\mathcal O}v^p\,dx\,ds\\
	&+p(p-1)\chi_1\int_0^{t\wedge\tau_m}\!\int_{\mathcal O}
	u^{p-1}\nabla u\cdot\nabla w\,dx\,ds\\
	&+p(p-1)\chi_2\int_0^{t\wedge\tau_m}\!\int_{\mathcal O}
	v^{p-1}\nabla v\cdot\nabla w\,dx\,ds+M_p(t)\\
	&+\frac{p(p-1)}2\int_0^{t\wedge\tau_m}\!\int_{\mathcal O}
	\left(u^{p-2}\|\sigma_1(u)\|_{\ell^2}^2
	+v^{p-2}\|\sigma_2(v)\|_{\ell^2}^2\right)\,dx\,ds,
	\end{aligned}
	\end{equation}
	where
	\[
	\begin{aligned}
	M_p(t)&:=p\sum_{k=1}^{\infty}\int_0^{t\wedge\tau_m}\int_{\mathcal O}
	u^{p-1}\sigma_k^1(u)\,dx\,dW_k^1(s)\\
	&\quad+p\sum_{k=1}^{\infty}\int_0^{t\wedge\tau_m}\int_{\mathcal O}
	v^{p-1}\sigma_k^2(v)\,dx\,dW_k^2(s).
	\end{aligned}
	\]
	Since $u$ and $v$ are nonnegative, the two competition terms on the left-hand side of \eqref{eq:complete-Lp-energy-identity} are nonnegative. We therefore omit them when deriving the upper estimate.

	We first estimate the chemotactic terms. By Young's inequality,
	\begin{align}
	& p(p-1)\chi_1\int_0^{t\wedge\tau_m}\!\int_{\mathcal O}
	u^{p-1}\nabla u\cdot\nabla w\,dx\,ds
	+p(p-1)\chi_2\int_0^{t\wedge\tau_m}\!\int_{\mathcal O}
	v^{p-1}\nabla v\cdot\nabla w\,dx\,ds\notag\\
	&\quad\le \frac{p(p-1)}4\int_0^{t\wedge\tau_m}\!\int_{\mathcal O}
	\left(u^{p-2}|\nabla u|^2+v^{p-2}|\nabla v|^2\right)\,dx\,ds
	+C_p\bigl(I_u(t)+I_v(t)\bigr),
	\label{eq:chemotaxis-first-bound}
	\end{align}
	where
	\[
	I_u(t):=\int_0^{t\wedge\tau_m}\!\int_{\mathcal O}u^p|\nabla w|^2\,dx\,ds,
	\qquad
	I_v(t):=\int_0^{t\wedge\tau_m}\!\int_{\mathcal O}v^p|\nabla w|^2\,dx\,ds.
	\]
	To bound $I_u(t)$ and $I_v(t)$, we use the equation for $w$. Let $r:=p+2$,
	\[
	w(t)=e^{-t(A+1)}w_0+
	\int_0^t e^{-(t-s)(A+1)}\bigl(\alpha u(s)+\beta v(s)\bigr)\,ds
	\]
	Using Lemma~\ref{lem:heat-semigroup} and Young's convolution inequality in time, we have
	\begin{equation}\label{eq:w-gradient-controlled-by-uv}
	\|\nabla w\|_{L^r(0,t\wedge\tau_m;L^r)}^r
	\le C_{p,T}\left(
	1+\|w_0\|_{W^{1,\infty}}^r
	{}+\int_0^{t\wedge\tau_m}\!\int_{\mathcal O}(u^r+v^r)\,dx\,ds
	\right).
	\end{equation}
	By H\"older's inequality and Young's inequality,
	\[
	I_u(t)+I_v(t)
	\le C\int_0^{t\wedge\tau_m}\left(
	\|u\|_{L^r}^p+\|v\|_{L^r}^p\right)\|\nabla w\|_{L^r}^2\,ds
	\le C_p\int_0^{t\wedge\tau_m}\!\int_{\mathcal O}
	\left(u^r+v^r+|\nabla w|^r\right)\,dx\,ds.
	\]
	Since $r=p+2<p+\zeta_i$, $i=1,2$, substituting \eqref{eq:w-gradient-controlled-by-uv} into the above inequality and applying Young's inequality, we obtain, for every $\varepsilon>0$,
	\begin{equation}\label{eq:chemotaxis-absorbed}
	C_p\bigl(I_u(t)+I_v(t)\bigr)
	\le \varepsilon\int_0^{t\wedge\tau_m}\!\int_{\mathcal O}
	\left(u^{p+\zeta_1}+v^{p+\zeta_2}\right)\,dx\,ds
	+C_{\varepsilon,p,T}\left(1+\|w_0\|_{W^{1,\infty}}^{p+2}\right).
	\end{equation}

	The two lower-order reaction terms are bounded by
	\[
	C_p\int_0^{t\wedge\tau_m}Y_p(s)\,ds.
	\]
	Since $\sigma^i(0)=0$ and $\sigma^i$ is globally Lipschitz, $\|\sigma^i(z)\|_{\ell^2}\le K|z|$. Consequently, the It\^o correction admits the same bound. The Burkholder--Davis--Gundy inequality together with Young's inequality yields
	\begin{equation}\label{eq:Lp-BDG-bound}
	\mathbb E\sup_{0\le s\le t}|M_p(s)|
	\le \frac14\mathbb E\sup_{0\le s\le t\wedge\tau_m}Y_p(s)
	+C_p\mathbb E\int_0^{t\wedge\tau_m}Y_p(s)\,ds.
	\end{equation}
	Substituting \eqref{eq:chemotaxis-first-bound}, \eqref{eq:chemotaxis-absorbed}, and \eqref{eq:Lp-BDG-bound} into \eqref{eq:complete-Lp-energy-identity}, choosing $\varepsilon$ sufficiently small, and then taking the supremum over time and expectation, we deduce
	\begin{align*}
	&\mathbb E\sup_{0\le s\le t\wedge\tau_m}Y_p(s)
	+c_p\mathbb E\int_0^{t\wedge\tau_m}\!\int_{\mathcal O}
	\left(u^{p-2}|\nabla u|^2+v^{p-2}|\nabla v|^2\right)\,dx\,ds\\
	&\quad+c_p\mathbb E\int_0^{t\wedge\tau_m}\!\int_{\mathcal O}
	\left(u^{p+\zeta_1}+v^{p+\zeta_2}\right)\,dx\,ds\\
	&\le C_{p,T}\left(1+\mathbb EY_p(0)
	+\mathbb E\|w_0\|_{W^{1,\infty}}^{p+2}\right)
	+C_p\int_0^t\mathbb E\sup_{0\le r\le s\wedge\tau_m}Y_p(r)\,ds.
	\end{align*}
	The Gronwall inequality then implies
	\[
	\mathbb E\sup_{0\le s\le T\wedge\tau_m}Y_p(s)\le C_{p,T}.
	\]
	Substituting this estimate into the preceding inequality gives
	\[
	\mathbb E\int_0^{T\wedge\tau_m}\!\int_{\mathcal O}
	\left(u^{p+\zeta_1}+v^{p+\zeta_2}\right)\,dx\,ds
	\le C_{p,T}.
	\]
	As the constant is independent of $m$, the proof is complete.

\end{proof}

We next upgrade the uniform $L^p$ estimates to an $L^\infty$ estimate.

\begin{lemma}
	\label{lem:Linf-bound-global}
	For every $T>0$, there exists a constant $C_T>0$, independent of $m$, such that
	\[
	\mathbb E
	\sup_{0\le t\le T\wedge\tau_m}
	\left(
	\|u(t)\|_{L^\infty}
	+
	\|v(t)\|_{L^\infty}
	\right)
	\le
	C_T.
	\]
\end{lemma}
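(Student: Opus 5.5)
We bound each piece of the mild formulation on $[0,T\wedge\tau_m]$, using the $m$-independent moments of Lemma~\ref{lem:Lp-bound-global}. Before $\tau_m$ the cut-off factor equals one, so for $t\le T\wedge\tau_m$,
\[
u(t)=e^{-tA}u_0-\chi_1\int_0^t e^{-(t-s)A}\nabla\cdot(u\nabla w)\,ds+\int_0^t e^{-(t-s)A}F_1(u,v)\,ds+M_1(t),
\]
with $M_1$ the stochastic convolution, and similarly for $v$.

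\textbf{Initial datum.} The term $e^{-tA}u_0$ is bounded by $\|u_0\|_{L^\infty}$, which has finite moments by Assumption~\ref{ass:initial-data}.

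\textbf{Reaction term.} Since $u,v\ge0$, we have $F_1(u,v)\le\mu_1u$. The heat semigroup is positivity preserving, so
\[
\int_0^t e^{-(t-s)A}F_1\,ds\le\mu_1\int_0^t e^{-(t-s)A}u\,ds .
\]
This avoids any control of the superlinear negative part. The right-hand side is estimated with Lemma~\ref{lem:heat-semigroup}(i) and the embedding $D((A_p+1)^\gamma)\hookrightarrow C(\overline{\mathcal O})$, which holds for $n/(2p)<\gamma<1$. This gives
\[
\sup_t\Bigl|\int_0^t e^{-(t-s)A}u\,ds\Bigr|_{L^\infty}\le C_T\sup_{s\le T\wedge\tau_m}\|u(s)\|_{L^p},
\]
whose expectation is bounded by Lemma~\ref{lem:Lp-bound-global}. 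Since $u$ is sandwiched between this quantity and the other terms, we work with an upper bound for $u$, which suffices because $u\ge0$.

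\textbf{Chemotaxis term.} This is the main obstacle. Fix $p$ large (say $p>2n$) and $\gamma\in(n/(2p),1/2)$. By Lemma~\ref{lem:heat-semigroup}(iii) the term is bounded by
\[
C\int_0^t (t-s)^{-\frac12-\gamma-\varepsilon}\,\|u\nabla w\|_{L^p}\,ds .
\]
We use $\|u\nabla w\|_{L^p}\le\|u\|_{L^{2p}}\|\nabla w\|_{L^{2p}}$. We then bound $\sup_{s\le t}\|\nabla w(s)\|_{L^{2p}}$ by $C(\|w_0\|_{W^{1,\infty}}+\sup_s\|u\|_{L^{2p}}+\sup_s\|v\|_{L^{2p}})$, using the estimate analogous to \eqref{eq:WU-estimate-q} with $q=2p$. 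The time singularity is integrable, so the term is controlled by products of suprema of $L^{2p}$ norms. Cauchy--Schwarz in $\omega$, together with Lemma~\ref{lem:Lp-bound-global} at exponent $4p$ and the moment assumption on $w_0$, then bounds its expectation uniformly in $m$. I expect this pathwise coupling between $u$ and $\nabla w$ to be the delicate point. If the product of suprema causes trouble, the fallback is to use the space-time integrability from the second assertion of Lemma~\ref{lem:Lp-bound-global} together with H\"older in time.

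\textbf{Stochastic convolution.} Apply \eqref{eq:convolution-infty} with the stopping time $\tau_m$ and the linear growth $\|\sigma_1(u)\|_{\ell^2}\le K|u|$. This gives
\[
\mathbb E\sup_t\|M_1\|_{L^\infty}\le C_T\,\mathbb E\Bigl(\int_0^{T\wedge\tau_m}\|u\|_{L^\infty}^3\,ds\Bigr)^{1/3}.
\]
This still involves $L^\infty$. We bound it by $\bigl(\sup_{s\le t}\|u\|_{L^\infty}\bigr)^{2/3}\bigl(\int_0^t\|u\|_{L^\infty}\,ds\bigr)^{1/3}$ and apply Young's inequality, obtaining
\[
\tfrac12\,\mathbb E\sup_{s\le t\wedge\tau_m}\|u\|_{L^\infty}+C\int_0^t\mathbb E\sup_{r\le s\wedge\tau_m}\|u\|_{L^\infty}\,ds .
\]
Alternatively, the general form of Lemma~\ref{lem:stochastic-convolution} with finite $q$ would bound it directly by $L^p$ norms, since it only needs $\|h\|_{2r,2q}$ with $1/r+1/q<1$. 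Choosing, say, $r=q=3$ gives control purely by $\mathbb E\sup\|u\|_{L^6}$, which is uniform in $m$. I would use this cleaner route.

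\textbf{Conclusion.} Summing the four contributions for $u$ and for $v$ yields $\mathbb E\sup_{t\le T\wedge\tau_m}(\|u\|_{L^\infty}+\|v\|_{L^\infty})\le C_T$. If the Young/Gronwall variant of the stochastic term is used instead, the quantity is finite for fixed $m$, which allows absorbing the $\tfrac12$ term before applying Gronwall.
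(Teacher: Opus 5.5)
Your proof is correct. For the chemotaxis term and the stochastic convolution it follows the paper's proof closely. You use the same $\gamma\in(n/(2p),1/2)$ with Lemma~\ref{lem:heat-semigroup}(iii) and the same H\"older split $\|u\|_{L^{2p}}\|\nabla w\|_{L^{2p}}$, with $\nabla w$ controlled through its mild formula. You also apply Lemma~\ref{lem:stochastic-convolution} with a finite spatial exponent, so that only $L^p$ moments from Lemma~\ref{lem:Lp-bound-global} are needed. The paper takes $r=\frac32$, $q=q_0>3$; your choice $r=q=3$ is equally admissible since $\frac13+\frac13<1$.

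The genuine difference is the reaction term. The paper bounds $|F_1(u,v)|\le C(u+u^{1+\zeta_1}+uv^{\eta_1})$ and estimates $\|F_1\|_{L^{p_0}}$ through $\|u\|_{L^{(1+\zeta_1)p_0}}^{1+\zeta_1}$ and $\|u\|_{L^{2p_0}}\|v\|_{L^{2\eta_1p_0}}^{\eta_1}$. It then invokes Lemma~\ref{lem:Lp-bound-global} at these higher exponents, which gives a two-sided bound on the whole reaction integral. You instead use $u,v\ge0$, $F_1\le\mu_1u$, and the order preservation of the Neumann heat semigroup to get only an upper bound. That suffices because $u\ge0$ and the remaining terms are bounded in absolute value.

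Your route needs only $\mathbb E\sup\|u\|_{L^p}$ and never touches the superlinear part. The price is that it relies on the sign structure: nonnegativity from Section~3, positive $\mu_1,a_1$, and positivity of $e^{-tA}$. The paper's route does not need signs at this step, but it consumes moments of order $(1+\zeta_1)p_0$ and $2\eta_1p_0$. Lemma~\ref{lem:Lp-bound-global} supplies these anyway, since it holds for every $p>2$.
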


\begin{proof}
	Choose $p_0>2$ large enough that there is $\gamma>0$ satisfying
	\[
	\frac{n}{2p_0}<\gamma<\frac12.
	\]
	Taking $\varepsilon>0$ sufficiently small, we have
	\[
	\gamma+\frac12+\varepsilon<1.
	\]

	We estimate $u$ by its mild formulation. For $0\le t\le T\wedge\tau_m$, Lemma~\ref{lem:heat-semigroup}, the embedding
	\[
	D((A_{p_0}+1)^\gamma)\hookrightarrow C(\overline{\mathcal O}),
	\]
	and the choice of $\gamma$, we obtain
	\begin{equation}
		\label{eq:global-u-infty}
	\begin{aligned}
		\|u(t)\|_{L^\infty}
		&\le C\|u_0\|_{L^\infty}
		+C\int_0^t
		\|(A+1)^\gamma e^{-(t-s)A}\nabla\cdot(u\nabla w)\|_{L^{p_0}}\,ds \\
		&\quad+C\int_0^t
		\|(A+1)^\gamma e^{-(t-s)A}F_1(u,v)\|_{L^{p_0}}\,ds\\
		&\quad+
		\left\|
		\int_0^t
		e^{-(t-s)A}\sigma_1(u(s))\,dW_1(s)
		\right\|_{L^\infty}
		\\
		&\le
			C\|u_0\|_{L^\infty} +
		\int_0^t
		(t-s)^{-\gamma-\frac12-\varepsilon}
		\|u(s)\nabla w(s)\|_{L^{p_0}}\,ds
		\\
		&\quad+
		C
		\int_0^t
		(t-s)^{-\gamma}
		\|F_1(u(s),v(s))\|_{L^{p_0}}\,ds
		\\
		&\quad+
		\left\|
		\int_0^t
		e^{-(t-s)A}\sigma_1(u(s))\,dW_1(s)
		\right\|_{L^\infty}.
	\end{aligned}
	\end{equation}

	Taking the supremum over $[0,T\wedge\tau_m]$ and then expectation in \eqref{eq:global-u-infty}, we first estimate the chemotactic term. The mild formula for $w$ gives, for every finite $r$,
	\[
	\sup_{0\le s\le T\wedge\tau_m}\|\nabla w(s)\|_{L^r}
	\le C_T\left(
	\|w_0\|_{W^{1,\infty}}
	+\sup_{0\le s\le T\wedge\tau_m}
	\bigl(\|u(s)\|_{L^r}+\|v(s)\|_{L^r}\bigr)
	\right).
	\]
	Thus Lemma~\ref{lem:Lp-bound-global}, used with sufficiently large exponents, and H\"older's inequality give

	\[
	\begin{aligned}
		&\quad\mathbb E
		\sup_{0\le t\le T\wedge\tau_m}
		\int_0^t
		(t-s)^{-\gamma-\frac12-\varepsilon}
		\|u(s)\nabla w(s)\|_{L^{p_0}}\,ds
		\\
		&\le
		C
		\mathbb E
		\left[
		\sup_{0\le s\le T\wedge\tau_m}
		\|u(s)\|_{L^{2p_0}}
		\sup_{0\le s\le T\wedge\tau_m}
		\|\nabla w(s)\|_{L^{2p_0}}
		\right]
		\sup_{0\le t\le T}\int_0^t
		(t-s)^{-\gamma-\frac12-\varepsilon}\,ds
		\\
		&\le
		C_T.
	\end{aligned}
	\]

	Next we estimate the reaction term. Since $F_1$ has polynomial growth,
	\[
	|F_1(u,v)|
	\le
	C
	\left(
	|u|+|u|^{1+\zeta_1}+|u||v|^{\eta_1}
	\right).
	\]
	Therefore,
	\[
	\begin{aligned}
		\|F_1(u(s),v(s))\|_{L^{p_0}}
		&\le
		C
		\left(
		\|u(s)\|_{L^{p_0}}
		+
		\|u(s)\|_{L^{(1+\zeta_1)p_0}}^{1+\zeta_1}
		\right.
		\\
		&\qquad\left.
		+
		\|u(s)v(s)^{\eta_1}\|_{L^{p_0}}
		\right).
	\end{aligned}
	\]
	For the mixed term, by H\"older's inequality,
	\[
	\|u(s)v(s)^{\eta_1}\|_{L^{p_0}}
	\le
	\|u(s)\|_{L^{2p_0}}
	\|v(s)\|_{L^{2\eta_1p_0}}^{\eta_1}.
	\]
	Applying Lemma~\ref{lem:Lp-bound-global} with the exponents appearing above, we get
	\[
	\mathbb E
	\sup_{0\le t\le T\wedge\tau_m}
	\int_0^t
	(t-s)^{-\gamma}
	\|F_1(u(s),v(s))\|_{L^{p_0}}\,ds
	\le
	C_T,
	\]
	because $\gamma<1$.

	For the stochastic convolution, choose $q_0>3$ and use Lemma~\ref{lem:stochastic-convolution} with $\eta=1$, $r=\frac32$ and $q=q_0$. Then
	\[
	\begin{aligned}
		&\mathbb E
		\sup_{0\le t\le T\wedge\tau_m}
		\left\|
		\int_0^t
		e^{-(t-s)A}\sigma_1(u(s))\,dW_1(s)
		\right\|_{L^\infty}
		\\
		&\qquad\le
		C(1+T)^{C'}
		\mathbb E
		\left(
		\int_0^{T\wedge\tau_m}
		\|\|\sigma_1(u(s))\|_{\ell^2}\|_{L^{2q_0}}^3\,ds
		\right)^{1/3}.
	\end{aligned}
	\]
	By the linear growth of $\sigma_1$,
	\[
	\|\|\sigma_1(u)\|_{\ell^2}\|_{L^{2q_0}}
	\le
	C
	\left(
	1+\|u\|_{L^{2q_0}}
	\right).
	\]
	Thus, we have
	\[
	\mathbb E
	\sup_{0\le t\le T\wedge\tau_m}
	\left\|
	\int_0^t
	e^{-(t-s)A}\sigma_1(u(s))\,dW_1(s)
	\right\|_{L^\infty}
	\le
	C_T.
	\]

	Combining the estimates for the chemotactic term, the reaction term and the stochastic convolution, we obtain
	\[
	\mathbb E
	\sup_{0\le t\le T\wedge\tau_m}
	\|u(t)\|_{L^\infty}
	\le
	C_T.
	\]

	The proof for $v$ is the same. Indeed, using the mild formulation of $v$, the estimate
	\[
	|F_2(v,u)|
	\le
	C
	\left(
	|v|+|v|^{1+\zeta_2}+|v||u|^{\eta_2}
	\right),
	\]
	and the stochastic convolution estimate for $\sigma_2(v)$, we obtain
	\[
	\mathbb E
	\sup_{0\le t\le T\wedge\tau_m}
	\|v(t)\|_{L^\infty}
	\le
	C_T.
	\]
	Therefore,
	\[
	\mathbb E
	\sup_{0\le t\le T\wedge\tau_m}
	\left(
	\|u(t)\|_{L^\infty}
	+
	\|v(t)\|_{L^\infty}
	\right)
	\le
	C_T.
	\]
	The proof is complete.
\end{proof}
The preceding uniform estimates yield global well-posedness.
\begin{theorem}
	\label{thm:main-result}
	Suppose that Assumptions~\ref{ass:stochastic-sigma}, \ref{ass:initial-data}, and~\ref{ass:self-damping} hold, and that all deterministic coefficients in system~\eqref{eq:main} are positive. Then the system admits a unique global adapted nonnegative mild solution $(u,v,w)$. For every $T>0$ and every $q>n$,
	\[
	u,v\in L^1\bigl(\Omega;C([0,T];C(\overline{\mathcal O}))\bigr)
	\]
	and
	\[
	w\in L^1\bigl(\Omega;C([0,T];W^{1,q}(\mathcal O))\bigr).
	\]
	Moreover,
	\[
	u(t,x)\ge0,\qquad v(t,x)\ge0,\qquad w(t,x)\ge0
	\]
	for all $t\in[0,T]$ and $x\in\overline{\mathcal O}$, almost surely.
\end{theorem}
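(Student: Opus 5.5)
The argument combines the maximal local solution $(u,v,w,\tau)$ from Section~3 with the uniform bound of Lemma~\ref{lem:Linf-bound-global}. The goal is to show $\tau=\infty$ almost surely; the stated regularity and nonnegativity then follow from the corresponding properties of the cut-off solutions in Theorem~\ref{thm:cutoff-existence}.

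\textbf{Step 1: $\tau=\infty$ almost surely.} Fix $T>0$. By definition of $\tau_m$ and continuity of the paths in $C(\overline{\mathcal O})$, on the event $\{\tau_m\le T\}$ we have
\[
\sup_{0\le t\le T\wedge\tau_m}\bigl(\|u(t)\|_{L^\infty}+\|v(t)\|_{L^\infty}\bigr)\ge m.
\]
Markov's inequality and Lemma~\ref{lem:Linf-bound-global} then give
\[
\mathbb P(\tau_m\le T)\le \frac{C_T}{m}.
\]
Since $\tau_m\uparrow\tau$, we get $\mathbb P(\tau\le T)\le\lim_m\mathbb P(\tau_m\le T)=0$. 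Taking $T\in\mathbb N$ and a countable union yields $\tau=\infty$ almost surely.

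\textbf{Step 2: identification with the cut-off solutions.} On $[0,T]$, for almost every $\omega$ there is $m$ with $\tau_m(\omega)>T$. On $[0,T]$ the solution therefore coincides with $X_m$, the solution of the $m$-cut-off system. This gives continuity of $u,v$ in $C(\overline{\mathcal O})$, continuity of $w$ in $W^{1,q}$, adaptedness, and nonnegativity from Theorem~\ref{thm:cutoff-existence}. Uniqueness follows from Theorem~\ref{thm:local-uniqueness}, applied to any two global solutions with $\tau_1=\tau_2=\infty$.

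\textbf{Step 3: integrability.} This is the step where some care is needed. For $u,v$, apply Fatou's lemma with $m\to\infty$ in Lemma~\ref{lem:Linf-bound-global}: since $T\wedge\tau_m\to T$ and the paths are continuous,
\[
\mathbb E\sup_{0\le t\le T}\bigl(\|u\|_{L^\infty}+\|v\|_{L^\infty}\bigr)\le C_T.
\]
This is the $L^1(\Omega;C([0,T];C(\overline{\mathcal O})))$ claim. For $w$, use the estimate \eqref{eq:WU-estimate-Linf}, which holds pathwise:
\[
\sup_{t\le T}\|w(t)\|_{W^{1,q}}\le C\|w_0\|_{W^{1,\infty}}+CT^{1-\gamma'}\sup_{t\le T}\bigl(\|u\|_{L^\infty}+\|v\|_{L^\infty}\bigr).
\]
Taking expectations and using Assumption~\ref{ass:initial-data} together with the bound just obtained gives $w\in L^1(\Omega;C([0,T];W^{1,q}))$. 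Continuity of $t\mapsto w(t)$ in $W^{1,q}$ follows from the mild formula and the semigroup estimates.

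The main point to verify is that the constant $C_T$ in Lemma~\ref{lem:Linf-bound-global} is truly independent of $m$, which that lemma asserts. The remaining steps are then routine.
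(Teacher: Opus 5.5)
Your proposal is correct and follows the paper's proof essentially step by step: Lemma~\ref{lem:Linf-bound-global} together with Markov's inequality gives $\mathbb P(\tau_m\le T)\le C_T/m$, hence $\tau=\infty$ almost surely, and uniqueness and nonnegativity are inherited from Theorems~\ref{thm:local-uniqueness} and~\ref{thm:cutoff-existence}. Your Step~3 only makes explicit what the paper states in one line, namely monotone convergence (or Fatou) to pass from $T\wedge\tau_m$ to $T$, and the pathwise bound \eqref{eq:WU-estimate-Linf} for $w$.
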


\begin{proof}
	Let $(u,v,w,\tau)$ be the maximal local mild solution. By Lemma~\ref{lem:Linf-bound-global}, for every $T>0$,
	\[
	\mathbb E
	\sup_{0\le t\le T\wedge\tau_m}
	\left(
	\|u(t)\|_{L^\infty} + \|v(t)\|_{L^\infty}
	\right)
	\le
	C_T,
	\]
	where $C_T$ is independent of $m$.

	On the event $\{\tau_m\le T\}$, by the definition of $\tau_m$,
	\[
	\sup_{0\le t\le T\wedge\tau_m}
	\left(
	\|u(t)\|_{L^\infty} + \|v(t)\|_{L^\infty}
	\right)
	\ge m.
	\]
	Therefore, by Markov's inequality,
	\[
	\mathbb P(\tau_m\le T)	\le	\frac{1}{m}	\mathbb E\sup_{0\le t\le T\wedge\tau_m}
	\left( \|u(t)\|_{L^\infty} + \|v(t)\|_{L^\infty} \right) \le \frac{C_T}{m}.
	\]
	Letting $m\to\infty$, and using $\tau_m\uparrow\tau$, we obtain
	\[
	\mathbb P(\tau\le T)=0.
	\]
	Hence $\tau\ge T$ almost surely for every $T>0$, proving global existence. Uniqueness follows from Theorem~\ref{thm:local-uniqueness}, and nonnegativity follows from Theorem~\ref{thm:cutoff-existence} and the localization construction. Lemma~\ref{lem:Linf-bound-global} gives the stated integrability of $u$ and $v$; the corresponding assertion for $w$ follows from its mild formula and the heat-semigroup estimates.
\end{proof}

\section{Continuous Dependence on Finite Time Intervals}
\label{sec:finite-time-continuous-dependence}

Fix $q>n$ and set
\[
\mathcal X_q
:=
C(\overline{\mathcal O})
\times C(\overline{\mathcal O})
\times W^{1,q}(\mathcal O).
\]
We equip this space with the norm
\[
\|(u,v,w)\|_{\mathcal X_q}
:=
\|u\|_{L^\infty}
+\|v\|_{L^\infty}
+\|w\|_{W^{1,q}}.
\]
We prove continuous dependence by using the estimates from Theorem~\ref{thm:local-uniqueness}. A related argument for stochastic two-species Lotka--Volterra reaction--diffusion systems appears in~\cite[Proposition~3.2]{NguyenYin2021}.

\begin{theorem}
	\label{thm:initial-data-continuity}
	Fix $T>0$, $p\ge2$, and $q>n$. Let \(X_{0,k}\) and \(X_0\) satisfy Assumption~\ref{ass:initial-data}, and let  $X_k=(u_k,v_k,w_k)$, $k\in\mathbb N$, and $X=(u,v,w)$ be the global mild solutions corresponding to initial data $X_{0,k}$ and $X_0$, respectively. Assume that all systems have the same coefficients, are driven by the same Wiener processes, and
	\[
	X_{0,k}\longrightarrow X_0
	\quad\text{in }L^p(\Omega;\mathcal X_q).
	\]
	Suppose further that
	\[
	\sup_{k\ge1}
	\mathbb E\sup_{0\le t\le T}\|X_k(t)\|_{\mathcal X_q}
	+
	\mathbb E\sup_{0\le t\le T}\|X(t)\|_{\mathcal X_q}
	<\infty.
	\]
	Then
	\[
	X_k\longrightarrow X
	\quad\text{in probability in }C([0,T];\mathcal X_q).
	\]
\end{theorem}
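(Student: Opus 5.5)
We localize with stopping times, prove a Lipschitz-type difference estimate for the stopped processes, and then remove the localization using the uniform moment bound assumed in the statement. For $R>0$ define
\[
\tau_R^k:=\inf\Bigl\{t\ge0:\ \|X_k(t)\|_{\mathcal X_q}+\|X(t)\|_{\mathcal X_q}\ge R\Bigr\}\wedge T .
\]
Up to $\tau_R^k$ both $(u_k,v_k)$ and $(u,v)$ stay in the ball of radius $R$ in $C(\overline{\mathcal O})^2$. In particular they coincide there with the solutions of the cut-off system \eqref{eq:cutoff-system} at level $m>R$, with all cut-off factors equal to one. This is the same device used in Theorem~\ref{thm:local-uniqueness}.

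\textbf{Step 1: stopped difference estimate.} Set $\delta_k(t):=\sup_{s\le t\wedge\tau_R^k}\bigl(\|u_k-u\|_{L^\infty}+\|v_k-v\|_{L^\infty}\bigr)$. Subtract the mild formulas for $u_k$ and $u$. The initial term is bounded by $C\|u_{0,k}-u_0\|_{L^\infty}$.

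For the chemotactic difference, write $u_k\nabla w_k-u\nabla w=(u_k-u)\nabla w_k+u\nabla(w_k-w)$. Apply Lemma~\ref{lem:heat-semigroup}(iii) with $\frac{n}{2p}<\gamma<\frac12$, using the embedding into $C(\overline{\mathcal O})$. For the $w$-difference, use the analogue of \eqref{eq:WU-difference}; it carries the extra initial contribution $\|e^{-t(A+1)}(w_{0,k}-w_0)\|_{W^{1,q}}\le C\|w_{0,k}-w_0\|_{W^{1,q}}$. Since $\|u\|_{L^\infty}$ and $\|\nabla w_k\|_{L^q}$ are bounded by $R$ up to the stopping time, this gives a contribution of order $C_R\,t^{\frac12-\gamma-\varepsilon}\bigl(\delta_k(t)+\|w_{0,k}-w_0\|_{W^{1,q}}\bigr)$. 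If $q$ is smaller than the exponent needed, interpolate through \eqref{eq:WU-estimate-q} with a larger $L^p$ exponent for the $w$-difference, which is controlled by $L^\infty$ differences of $u,v$.

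The reaction difference is Lipschitz on the ball, by the mean value theorem as in \eqref{eq:F1-difference}, and contributes $C_R\,t\,\delta_k(t)$. The stochastic convolution difference is handled by Lemma~\ref{lem:stochastic-convolution} applied with $h=\mathbf 1_{[0,\tau_R^k]}(\sigma_1(u_k)-\sigma_1(u))$, as in \eqref{eq:stoch-difference-u}. Taking $\mathbb E$ of these bounds gives
\[
\mathbb E\,\delta_k(t)\le C\,\mathbb E\|X_{0,k}-X_0\|_{\mathcal X_q}+C_R\bigl(t^{\frac12-\gamma-\varepsilon}+t+t^{\rho}\bigr)\mathbb E\,\delta_k(t).
\]

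\textbf{Step 2: closing on $[0,T]$.} This step is the main obstacle, because the smallness needed for absorption holds only for short times. I would argue exactly as in the contraction step of Theorem~\ref{thm:cutoff-existence}. Choose $T_1=T_1(R)$ so that the bracket is at most $\frac12$; this gives $\mathbb E\delta_k(T_1)\le 2C\,\mathbb E\|X_{0,k}-X_0\|_{\mathcal X_q}$.

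Then restart on $[T_1,2T_1]$. The new "initial" difference is $X_k(T_1\wedge\tau_R^k)-X(T_1\wedge\tau_R^k)$, whose $W^{1,q}$ part of $w$ is controlled via \eqref{eq:WU-difference} by $\delta_k(T_1)$ and the $w_0$-difference. One has to check that the history integrals over $[0,T_1]$ give contributions bounded by $C_R$ times the already-controlled differences. An alternative that avoids restarting is a Volterra-type Gronwall argument: work with $\mathbb E\sup_{s\le t}$ of the difference, note that the kernels $(t-s)^{-\gamma-\frac12-\varepsilon}$ are integrable, and use the generalized Gronwall inequality. For the stochastic term this needs the $\eta=1$ version of the convolution bound in a form where $\mathbb E\bigl(\int_0^t\cdots\,ds\bigr)^{1/3}$ is dominated by $\int_0^t\mathbb E\,\delta_k(s)\,ds$-type quantities. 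I would try the restart argument first.

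After finitely many steps, $\mathbb E\,\delta_k(T)\le C_{R,T}\,\mathbb E\|X_{0,k}-X_0\|_{\mathcal X_q}\to0$. The $w$-component then follows from the mild formula: $\sup_{t\le\tau_R^k}\|w_k-w\|_{W^{1,q}}\le C\|w_{0,k}-w_0\|_{W^{1,q}}+C_T\delta_k(T)$.

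\textbf{Step 3: removing the localization.} For $\epsilon>0$,
\[
\mathbb P\Bigl(\sup_{t\le T}\|X_k-X\|_{\mathcal X_q}>\epsilon\Bigr)\le \mathbb P(\tau_R^k<T)+\epsilon^{-1}\,\mathbb E\sup_{t\le\tau_R^k}\|X_k-X\|_{\mathcal X_q}.
\]
By Markov's inequality and the assumed uniform bound, $\mathbb P(\tau_R^k<T)\le R^{-1}\bigl(\sup_k\mathbb E\sup_{t\le T}\|X_k\|_{\mathcal X_q}+\mathbb E\sup_{t\le T}\|X\|_{\mathcal X_q}\bigr)\le C/R$, uniformly in $k$. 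Fix $R$ large so that this term is small, then let $k\to\infty$ using Step 2. Since $L^p(\Omega)$ convergence of the initial data implies $L^1(\Omega)$ convergence, this gives convergence in probability in $C([0,T];\mathcal X_q)$.
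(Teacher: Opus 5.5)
Your proposal is correct and follows essentially the same route as the paper. Both localize with the exit time from the ball of radius $R$ in $\mathcal X_q$, prove a short-time difference estimate whose chemotaxis, reaction and stochastic-convolution contributions carry factors vanishing as $h\downarrow0$, absorb and restart on intervals of fixed length, and then remove the localization via Markov's inequality and the uniform moment bound. The differences are cosmetic: you use first moments instead of $p$-th moments, which suffices since $L^p(\Omega)$ convergence of the data implies $L^1(\Omega)$ convergence. Also, the interpolation detour in Step 1 is unnecessary, because $q>n$ already permits $\gamma\in(n/(2q),1/2)$ with integrability exponent $q$, exactly as in the paper.
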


\begin{proof}
	Set
	\[
	\overline X_k:=X_k-X,
	\qquad
	\overline X_{0,k}:=X_{0,k}-X_0,
	\]
	and use the analogous notation for the components. By assumption,
	\begin{equation}
		\label{eq:uniform-finite-time-Xq-bound}
		\sup_{k\ge1}
		\mathbb E\sup_{0\le t\le T}\|X_k(t)\|_{\mathcal X_q}
		+
		\mathbb E\sup_{0\le t\le T}\|X(t)\|_{\mathcal X_q}
		<\infty.
	\end{equation}
	For $R>0$, define
	\[
	\rho_{k,R}
	:=
	\inf\left\{
	t\in[0,T]:
	\|X_k(t)\|_{\mathcal X_q}
	\vee
	\|X(t)\|_{\mathcal X_q}
	\ge R
	\right\}\wedge T.
	\]
	On $[0,\rho_{k,R}]$, the chemotaxis products satisfy
	\begin{equation}
		\label{eq:stability-chemotaxis-products}
		\begin{aligned}
			&\|u_k\nabla w_k-u\nabla w\|_{L^q}
			+\|v_k\nabla w_k-v\nabla w\|_{L^q}
			\\
			&\qquad\le
			C_R\|\overline X_k\|_{\mathcal X_q}.
		\end{aligned}
	\end{equation}
	Since $F_1$ and $F_2$ are locally Lipschitz, on $[0,\rho_{k,R}]$ we also have
	\begin{equation}
		\label{eq:stability-reaction-products}
		\begin{aligned}
			&\quad\|F_1(u_k,v_k)-F_1(u,v)\|_{L^\infty}+
			\|F_2(v_k,u_k)-F_2(v,u)\|_{L^\infty}\le
			C_R\|\overline X_k\|_{\mathcal X_q}.
		\end{aligned}
	\end{equation}
	
	Choose $\gamma\in(n/(2q),1/2)$ and $\varepsilon>0$ such that
	\[
	\gamma+\frac12+\varepsilon<1,
	\]
	and fix $\gamma'\in(1/2,1)$. Subtracting the mild formulations and arguing as in Theorem~\ref{thm:local-uniqueness}, with \eqref{eq:stability-chemotaxis-products}--\eqref{eq:stability-reaction-products} and Lemma~\ref{lem:stochastic-convolution}, gives, for $0<h\le T$,
	\begin{equation}
		\label{eq:short-time-stability}
		\begin{aligned}
			D_{k,R}(h)
			&:=
			\mathbb E\sup_{0\le t\le h\wedge\rho_{k,R}}
			\|\overline X_k(t)\|_{\mathcal X_q}^p
			\\
			&\le
			C\mathbb E\|\overline X_{0,k}\|_{\mathcal X_q}^p
			+\kappa_{p,R}(h)D_{k,R}(h),
		\end{aligned}
	\end{equation}
	where
	\[
	\kappa_{p,R}(h)
	:=
	C_{p,R}
	\left(
	h^{p(1-(\gamma+\frac12+\varepsilon))}
	+h^{p(1-\gamma')}
	+h^p
	+h^{p/3}
	\right)
	\longrightarrow0
	\qquad\text{as }h\downarrow0.
	\]
	Here the last term in $\kappa_{p,R}$ comes from the global Lipschitz continuity of $\sigma_i$ and the stochastic-convolution estimate.
	
	Choose $h_0>0$, independently of $k$, such that $\kappa_{p,R}(h_0)\le1/2$. After absorbing the second term in~\eqref{eq:short-time-stability}, apply the same estimate on the successive intervals of length $h_0$, using the difference at the left endpoint as the new initial value. A finite induction gives
	\begin{equation}
		\label{eq:localized-finite-time-stability}
		\mathbb E\sup_{0\le t\le\rho_{k,R}}
		\|X_k(t)-X(t)\|_{\mathcal X_q}^p
		\le
		C_{p,T,R}
		\mathbb E\|X_{0,k}-X_0\|_{\mathcal X_q}^p.
	\end{equation}
	
	For every $\delta>0$,~\eqref{eq:localized-finite-time-stability} and Markov's inequality imply
	\begin{align*}
		&\mathbb P\left\{
		\sup_{0\le t\le T}\|X_k(t)-X(t)\|_{\mathcal X_q}>\delta
		\right\}
		\\
		&\quad\le
		\mathbb P\{\rho_{k,R}<T\}
		+\delta^{-p}C_{p,T,R}
		\mathbb E\|X_{0,k}-X_0\|_{\mathcal X_q}^p.
	\end{align*}
	Moreover,~\eqref{eq:uniform-finite-time-Xq-bound} gives
	\[
	\lim_{R\to\infty}
	\sup_{k\ge1}\mathbb P\{\rho_{k,R}<T\}
	=0.
	\]
	For fixed $R$, the second term tends to zero as $k\to\infty$. Letting first $k\to\infty$ and then $R\to\infty$ proves the claim.
\end{proof}

\section{Conclusion}

	Chemotactic aggregation, interspecific competition, and environmental fluctuations may all influence the spatial distribution of populations that share a chemical signal. Our model includes these effects in a single system. Both populations produce and follow a parabolic signal, interact through nonlinear competition, and are directly affected by multiplicative noise. This provides a mathematical setting for asking whether population damping can prevent a loss of well-posedness when aggregation and random perturbations occur together.

	We proved that superquadratic self-damping gives a unique global adapted nonnegative mild solution. We also established continuous dependence in probability on admissible initial data over every finite time interval, with the signal measured in $W^{1,q}(\mathcal O)$ for $q>n$ and under a finite-time moment bound. Global existence and nonnegativity ensure that the two population densities and the signal concentration remain well defined for all finite times covered by the model. Continuous dependence shows that small changes in their initial profiles produce, in probability, only small changes over a fixed finite interval. These conclusions concern well-posedness and stability; they do not determine whether the populations coexist or whether one population eventually excludes the other.

	Due to the two chemotactic fluxes and the stochastic terms, the $L^p$ It\^o identities contain coupled terms with no fixed sign, and all bounds used to extend the solution must be uniform in the cut-off level. Parabolic estimates for the signal equation, together with superquadratic self-damping, control the powers generated by chemotaxis. Stochastic convolution estimates then provide the $L^\infty$ bounds required for global well-posedness.

	The quadratic self-damping case, higher-dimensional domains, and noise acting on the signal equation are not covered by the present proof. Long-time behavior under random perturbations, including coexistence, competitive exclusion, and spatial patterns, remains open.


	\bibliographystyle{spbasic}
	\bibliography{refer}

\end{document}